\documentclass[12pt]{article}
\usepackage[left=3cm, right=3cm, top=2cm]{geometry}
\usepackage{amsmath, amsthm, amssymb, mathtools}
\allowdisplaybreaks
\usepackage{amscd,amssymb}
\usepackage{comment}
\usepackage{footmisc}
\usepackage[all]{xy}
\usepackage{color}
\usepackage{verbatim}
\usepackage{mathrsfs}
\usepackage{graphicx,epsfig}
\usepackage[T1]{fontenc}
\usepackage[ansinew]{inputenc}
\usepackage{float}
\usepackage{setspace}
\usepackage{xpatch}
\usepackage{amsfonts}

\usepackage[colorlinks=true,linkcolor=blue,urlcolor  = blue]{hyperref}
\usepackage{subcaption}
\usepackage{pst-node}
\usepackage{tikz-cd}
\usetikzlibrary{positioning}
\usetikzlibrary{shapes,arrows}
\usepackage[new]{old-arrows}
\usepackage{tikz}
\usepackage{graphicx,epsfig}
\usepackage{mathrsfs}
\usepackage{verbatim}
\usepackage{comment}
\usepackage{cleveref}
\usepackage{hyperref}
\usepackage{url}
\usepackage{footmisc}

 \usetikzlibrary{calc,patterns,
 	decorations.pathmorphing,
 	decorations.markings, shapes.geometric, graphs, graphs.standard, quotes,shapes,chains,scopes,positioning,arrows}
\usepackage{tikzscale}
%===================================
\tikzset{|/.tip={Bar[width=.8ex,round]}}
\newtheorem{thm}{Theorem}
\newtheorem{cor}[thm]{Corollary}
\newtheorem{lem}[thm]{Lemma}
\numberwithin{thm}{section}

\usepackage{tikz-cd}
 \newtheorem{prop}[thm]{Proposition}
\theoremstyle{definition}
 
 \newtheorem{rmk}{Remark}

 \newtheorem{ques}{Question}
 \newtheorem*{note}{Note}
 
%===================================

%
\newcommand{\A}{\mathcal{A}}
\newcommand{\B}{\mathcal B}

\newcommand{\bS}{\mathbb{S}}
\newcommand{\F}{\mathcal{F}}

\def\F{\mathcal{F}}

\def\dl{\Delta L}
\def\K{\mathcal{K}}

\def\x{\times}

 \hbadness=99999
 \vbadness=99999
%===================================================

\title{Topology of Clique Complexes of Line Graphs}
%\title{$2 K_2$-free complexes of graphs}

\author{Shuchita Goyal\footnote{Chennai Mathematical Institute, Chennai, India. Email: shuchitagoyal23@gmail.com}, Samir Shukla\footnote{Indian Institute of Technology Bombay, Mumbai, India. Email: samirshukla43@gmail.com}, Anurag Singh\footnote{Chennai Mathematical Institute, Chennai, India. Email: anuragsingh@cmi.ac.in}}

\begin{document}

\date{}

\maketitle
\begin{abstract}
% For fixed graphs $G$ and $H$, $\F(G, H)$ is a simplicial complex whose simplices are $H$-free subgraphs of $G$. 
The clique complex of a graph $G$ is a simplicial complex whose simplices are all the cliques of $G$, and the line graph $L(G)$ of $G$ is a graph whose vertices are the edges of $G$ and the edges of $L(G)$ are incident edges of $G$.
In this article, we determine the homotopy type of the clique complexes of line graphs for several classes of graphs including  triangle-free graphs, chordal graphs, complete multipartite graphs, wheel-free graphs, and $4$-regular circulant graphs.  We also give a closed form formula for the homotopy type of these complexes in several cases.

\end{abstract}
\medskip

\noindent {\bf Keywords}: $H$-free complex, clique complex, line graph, chordal graphs, wheel-free graphs, circulant graphs

\vspace{0.1cm}

\noindent 2010 {\it Mathematics Subject Classification}: 05C69, 55P15

\vspace{.1in}

\hrule

\section{Introduction}
 
% For a fixed family of graphs $\H$, a graph $G$ is called $\H$-free if it does not contain any subgraph isomorphic to any member of $\H$. Finding $\H$-free subgraphs of a graph $G$ is a very well studied notion in extremal graph theory. It is easy to observe that $\H$-freeness is a hereditary property, thereby giving a simplicial complex for any fixed graph $G$, denoted $\F(G,\H)$. These complexes have already been studied for different classes of graphs $\H$, for example see \cite{PSA, tardos},  when $\H$ contains trees  on fixed vertex cardinality, \cite{barmak, meshulam, SM2018, KK10} when $\H$ contains only a clique, see \cite{HL2020, LSW} when $\H$ is $r$ disjoint copies of complete graphs on $2$ vertices, denoted as $K_2$, \cite{jon08, anu20, wach03} when $\H$ contains only star graph. 

For a fixed graph $H$, a subgraph $K$ of a graph $G$ is called $H$-free if $K$ does not contain any subgraph isomorphic to $H$. Finding $H$-free subgraphs of a graph $G$ is a well studied notion in extremal graph theory. It is easy to observe that $H$-freeness is a hereditary property, thereby giving a simplicial complex for any fixed graph $G$, denoted $\F(G,H)$. These complexes have already been studied for different graphs $H$, for example see \cite{barmak, DE08, KK10, meshulam, SM2018, woodroofe} when $H$ is a complete graph, \cite{jon08, anu20, wach03} when $H$ is a star graph, \cite{HL2020, LSW} when $H$ is $r$ disjoint copies of complete graphs on $2$ vertices, denoted $rK_2$. Replacing $H$ by a class of trees on fixed number of vertices has also gained attention in the last decade, for instance see \cite{PSA, PS18, tardos}.

% \cite{LSW}, studied these complexes when $G$ is a complete graph or a complete bipartite graph and $\H$ is a disjoint union of $r$ copies of $K_2$, denoted $rK_2$.

In this article, our focus will be on the complex $\F(G,2K_2)$.  In \cite{LSW},  Linusson et. al have studied $\F(G,rK_2)$ for complete graphs and complete bipartite graphs. They showed that  $\F(G,rK_2)$ is homotopy equivalent to a wedge of $(3r-4)$-dimensional spheres when $G$ is a complete graph, and  $(2r-3)$-dimensional spheres if $G$ is a complete bipartite graph. Further, Holmsen  and Lee \cite{HL2020} studied these complexes for general graph $G$ and they showed that $\F(G,rK_2)$ is $(3r-3)$-Leray\footnote{A simplicial complex $\mathsf{K}$ is called  {\em $d$-Leray} (over a field $\mathbb{F}$) if $\tilde{H}_i(\mathsf{L}, \mathbb{F}) = 0 $ for all $i\geq d$ and  for every induced subcomplex $\mathsf{L}\subseteq \mathsf{K}$.}, and it is  $(2r-2)$-Leray if  $G$ is bipartite. It is worth noting that $\F(G,2K_2)$ is the clique complex of the line graph of $G$, denoted $\Delta L(G)$. Clique complexes of graphs have a very rich literature in topological combinatorics, for instance see \cite{MA13, MK1}. Recently, Nikseresht \cite{AN2020} studied some algebraic properties like Cohen-Macaulay, sequentially Cohen-Macaulay, Gorenstein, etc., of clique complexes of line graphs. In this article, we determine the exact homotopy type of $\dl(\underline{\ \ })$ for various classes of graphs including  triangle-free graphs, chordal graphs, complete multipartite graphs, wheel-free graphs and $4$-regular circulant graphs. Moreover, we show that $\dl(\underline{\ \ })$ in each of these cases is homotopy equivalent to a wedge of equidimensional spheres, except for the $4$-regular circulant graphs for which it is homotopy equivalent to a wedge of circles and 2-spheres. 
%We also show that the functor $\dl$ from the category of graphs to the category of simplicial complexes commutes with the suspension for triangle-free graphs.

% \subsection*{Organization of the article} 

This article is organized as follows. In \Cref{sec:prelim}, we recall various definitions and results which are needed. In the next section, we analyze $\dl(G)$ as the $2$-skeleton of the clique complex of $G$ and then use it to compute the homotopy type of $\dl(G)$ for various classes of graphs including triangle free graphs and chordal graphs. In  \Cref{sec:chordal}, by realizing $\dl$ as a functor from the category of graphs to the category of simplicial complexes, we compute the  homotopy type of $\dl(G)$ when $G$ is a complete multipartite graph.
\Cref{sec:wheel} is devoted towards the study of $\dl$ of wheel-free graphs and 4-regular circulant graphs. 
In the last section, we discuss a few questions that arise naturally from the work done in this article. 

% \subsection*{Our Contributions} In \Cref{sec:functor}, we show that $\dl$ as a functor, from the category of graphs to the category of simplicial complexes, behaves well (upto homotopy equivalence) with the  pushout operation when the maps in consideration are induced subgraph inclusions (cf. \Cref{lem:disconn graph NM2}). Further, \Cref{lem:nm2ofsuspension} shows that it commutes with the suspension  for triangle-free graphs. We also compute the homotopy type of $\dl$ of cone of any graph in \Cref{lem:nm2ofconeofgraph}, and end this section by giving the closed form formula for the homotopy type of $\dl$ of triangle-free graphs in \Cref{prop:nm2oftrianglefree}.
% The next section is devoted towards the study of $\dl$ of wheel-free graphs (cf. \Cref{thm:wheelfree}) and 4-regular circulant graphs (cf. \Cref{cor:circulant}). The main results of \Cref{sec:chordal} are \Cref{thm:chordalmain} and \Cref{thm:multipartite}, where we make use of the functorial properties of $\dl$ to determine the homotopy type of $\dl$ of chordal graphs and complete multipartite graphs, respectively.
% We end this article with a few observations and remarks.

\section{Preliminaries}\label{sec:prelim}

A  ({\it simple}) {\it graph} is an ordered pair $G=(V(G),E(G))$, where $V(G)$ is called the set of vertices and $E(G) \subseteq \binom{V(G)}{2}$, the set of (unordered) edges of $G$. The vertices $v_1, v_2 \in V(G)$ are said to be adjacent, if $(v_1,v_2)\in E(G)$ and this  is also denoted by $v_1 \sim v_2$. For $v \in V(G)$, the set of its {\it neighbours}, $N_G(v)$, in $G$ is $\{x \in V(G) : x \sim v\}$. The {\it degree} of a vertex $v\in V(G)$ is the cardinality of the set of its neighbours. A vertex $v$ of degree $1$ is called a {\it leaf vertex} and the vertex adjacent to $v$ is called the {\it parent} of $v$. An edge adjacent to a leaf vertex  is called a  {\it leaf/hanging edge}. A graph $H$ with $V(H) \subseteq V(G)$ and $E(H) \subseteq E(G)$ is called a {\it subgraph} of the graph $G$. 
For a nonempty subset $U \subseteq V(G)$, the induced subgraph $G[U]$, is the subgraph of $G$ with vertices $V(G[U]) = U$ and $E(G[U]) = \{(a, b) \in E(G) : a, b \in U\}$. Similarly, for a nonempty subset $F \subseteq E(G)$, the induced subgraph $G[F]$, is the subgraph of $G$ with vertices $V(G[F]) = \{v \in V(G) : v \in e \ \text{for some} \ e \in F\}$ and $E(G[F]) = F$. In this article, the graph $G[V(G)\setminus A]$ will be denoted by $G-A$ for $A\subsetneq V(G)$.

For $n \geq 1$, the {\it complete graph} on $n$ vertices is a graph where any two distinct vertices are adjacent, and  is denoted by $K_n$. For $n \geq 3$, the {\it cycle graph $C_n$} is the graph with $V(C_n) = \{1, \ldots, n\}$ and $E(C_n) = \{(i, i+1) : 1 \leq i \leq n-1\} \cup \{(1, n)\}$. For $r \geq 1$, the {\it path graph} of length $r$ is a graph with  vertex set $V(P_r) = \{0, \ldots, r\}$ and  edge set $E(P_r) = \{(i,i+1) :  0 \leq i \leq r-1\}$, and is denoted by $P_r$. The {\it line graph}, $L(G)$, of a graph $G$ is the graph with
$V(L(G)) = E(G)$ and $E(L(G)) = \{((v_1, v_2), (v_1^{\prime}, v_2')) : |\{v_1, v_2\} \cap \{v_1', v_2'\}| = 1\}$.

An ({\it abstract}) {\it simplicial complex} $\K$ is a collection of finite sets such that if $\tau \in \K$ and $\sigma \subset \tau$, then $\sigma \in \K$. The elements  of $\K$ are called the  {\it simplices} (or {\it faces}) of $\K$.  If $\sigma \in \K$ and $|\sigma |=k+1$, then $\sigma$ is said to be {\it $k$-dimensional}. The maximal simplices of $\K$ are also called {\it facets} of $\K$. The set of $0$-dimensional simplices of $\K$ is denoted by $V(\K)$, and its elements are called {\it vertices} of $\K$. A {\it subcomplex} of a simplicial complex $\K$ is a simplicial complex whose simplices are contained in $\K$. For $k \geq 0$, the {\it $k$-skeleton} of a simplicial complex $\K$ is a subcomplex consisting of all the simplices of dimension $\leq k$ and it is denoted by $\K^{(k)}$. The {\it cone} of a simplicial complex $\K$ with apex $w$, denoted $\mathcal{C}_w(\K)$, is a simplicial complex whose facets are $\sigma \cup \{w\}$ for each facet $\sigma $ of $\K$. In this article, we always assume empty set as a simplex of any simplicial complex and we consider any simplicial complex as a topological space, namely its  geometric realization. For the definition of geometric realization, we refer to Kozlov's book \cite{dk}.

% For a simplex $\sigma \in \K$, the {\it link} of $\sigma$ is the complex defined as 
% %
% $$
% \lk_{\K}(\sigma) = \{\eta \in \K   :  \sigma \cup \eta \in \K \ \text{and} \ \sigma \cap \eta = \emptyset\}.
% $$ 

% \begin{lem}  [Lemma $4.6$, \cite{SSA}] \label{lem:link} Let $\K$ be  a simplicial complex and $v\in V(\K)$. Let $\L = \{\sigma \in \K : v \notin \sigma \}$ be  a subcomplex of $\K$. If  $lk_{\K}(v)$ is contractible, then $\K \simeq \L$. 
% \end{lem}

The {\it clique complex}, $\Delta(G)$, of a graph $G$ is the simplicial complex whose simplices are subsets $\sigma \subseteq  V(G)$ such that 
$G[\sigma]$ is a complete graph. 

The clique complex of line graph of a graph $G$, $\dl(G)$, has also been studied by Linusson, Shareshian, and Welker in \cite{LSW} and  Holmsen  and Lee in \cite{HL2020}. They denoted these complexes by $NM_2(G)$ and proved the following results. 

% The complex $\dl(G)$ is same as the 2 non-matching complex of $G$, denoted $\dl(G)$. 
% The concept of non-matching complex was introduced by Svante Linusson, John Shareshian, and Volkmar Welker \cite{LSW} and they showed the following.

\begin{thm}[Theorem $1.1$, \cite{LSW}] \label{thm:completegraph}
    Let $n$ be a positive integers. Then $NM_2(K_n)$ is homotopy equivalent to a wedge of spheres of dimension $2$.
\end{thm}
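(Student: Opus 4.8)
The plan is to analyze $NM_2(K_n)=\dl(K_n)$ through its facets and a nerve argument, and then read off the homotopy type from an explicit cell attachment. A simplex of $\dl(K_n)$ is a family of pairwise incident edges of $K_n$, and such a family is either a \emph{star} (all edges through one common vertex) or a \emph{triangle} (the three edges of some $K_3\subseteq K_n$). Thus the facets are the stars $S_v$, one for each vertex $v$ of $K_n$, each a full simplex on the $n-1$ edges through $v$, together with the $2$-dimensional triangle faces. Let $Y=\bigcup_v S_v$ be the subcomplex obtained as the union of the stars. Every $0$- and $1$-simplex of $\dl(K_n)$ already lies in some $S_v$ (two incident edges share a vertex, hence lie in a common star), so $\dl(K_n)$ is obtained from $Y$ by attaching one $2$-cell along its boundary for each of the $\binom{n}{3}$ triangles of $K_n$.

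First I would compute the homotopy type of $Y$ by applying the nerve lemma to the cover $\{S_v\}_v$ by contractible subcomplexes. Here $S_u\cap S_v$ is the single vertex $\{u,v\}$ (the unique edge through both $u$ and $v$), and every triple intersection is empty; since all nonempty intersections are contractible, the nerve is the complete graph $K_n$ viewed as a $1$-dimensional complex. Hence $Y\simeq K_n\simeq\bigvee_{\binom{n-1}{2}}S^1$.

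The key step is to identify the attaching maps of the triangle cells and deduce simple connectivity. Choosing the spanning tree of the nerve $K_n$ consisting of the edges $\{1,j\}$, the free generators of $\pi_1(Y)\cong\pi_1(K_n)$ correspond to the non-tree edges $\{i,j\}$ with $2\le i<j\le n$, and the fundamental loop of $\{i,j\}$ is exactly the triangle loop $i\to 1\to j\to i$, that is, the image in the nerve of the boundary of the triangle $\{1,i,j\}$. Since attaching the $2$-cell of a triangle sets the corresponding triangle loop to $1$ in $\pi_1$, attaching the cells of the triangles through vertex $1$ already trivializes every free generator; therefore $\pi_1(\dl(K_n))=1$. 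I expect this translation, from the combinatorial boundary of each triangle cell to a loop in the nerve $K_n$ and the verification that these loops normally generate $\pi_1(Y)$, to be the main obstacle, and the spanning-tree observation is what makes it transparent, since the generators themselves are realized as triangle loops.

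Finally, because $\dl(K_n)$ is homotopy equivalent to a $2$-dimensional CW complex (a wedge of circles with $2$-cells attached) and is simply connected, it is homotopy equivalent to a wedge of $2$-spheres: by the Hurewicz theorem $\pi_2\cong H_2$ is free abelian, and a wedge of spheres representing a basis maps in by a homology isomorphism, hence a homotopy equivalence by Whitehead's theorem. The number of spheres is then $\chi(\dl(K_n))-1=\binom{n}{3}-\binom{n-1}{2}=\binom{n-1}{3}$, which both recovers and sharpens the stated result.
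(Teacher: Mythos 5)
Your argument is correct, and it takes a genuinely different route from the paper, which in fact does not prove this statement at all: it is quoted from \cite{LSW}, where it is established by discrete Morse theory. Within the paper's own machinery the statement falls out of \Cref{lem:2skel}: covering $\dl(K_n)$ by \emph{all} facets (stars and triangles) and adding a barycenter for each triangle identifies $\dl(K_n)$ up to homotopy with $(\Delta(K_n))^{(2)}$, the $2$-skeleton of an $(n-1)$-simplex, which is simply connected, hence a wedge of $2$-spheres by the standard fact invoked in \Cref{thm:chordalmain}; the count $\binom{n-1}{3}$ also follows from \Cref{lem:nm2ofconeofgraph}, since $K_n$ is a cone over $K_{n-1}$ with $\binom{n-1}{3}$ triangles. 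You instead apply the nerve lemma only to the star cover of the subcomplex $Y$ (correctly getting nerve $K_n$ as a $1$-complex, since $S_u\cap S_v$ is the single vertex $\{u,v\}$ and triple intersections are empty), then handle the $\binom{n}{3}$ triangle simplices as attached $2$-cells and kill $\pi_1$ explicitly via the spanning-tree presentation, finishing with Hurewicz--Whitehead and an Euler characteristic count. What your route buys: it is self-contained, makes the attaching maps explicit, and sharpens the theorem with the exact number $\binom{n-1}{3}$ of spheres (consistent with the paper's cone computation). What the paper's route buys: brevity, since folding the triangles into the nerve cover eliminates precisely the step where your argument needs the most care --- verifying that under the nerve equivalence the boundary of the triangle $\{1,i,j\}$ represents the loop $1\to i\to j\to 1$ in $\pi_1(K_n)$. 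Your sketch of this is right, and it can be made rigorous with the standard simplicial map from the barycentric subdivision of $Y$ to the nerve (send the barycenter of $\sigma$ to a chosen vertex $v$ with $\sigma\subseteq S_v$; for the three edges of a triangle the choice is forced, and the image loop is exactly the claimed one). One cosmetic slip: for $n=3$ the stars are faces of the triangle rather than facets, but this does not affect your argument, which still yields $Y\simeq \bS^1$ capped by one $2$-cell.
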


\begin{thm} [Theorem $1.1$, \cite{HL2020}] \label{thm:leray}
    For a graph $G$, the complex $NM_2(G)$ is $3$-Leray. 
\end{thm}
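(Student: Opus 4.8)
The plan is to exploit that $NM_2(G)=\dl(G)$ is the clique complex of $L(G)$, hence a flag complex on the vertex set $E(G)$. For a flag complex, every induced subcomplex is again a clique complex, namely the clique complex of the induced subgraph of $L(G)$ on the chosen vertices; and for a subset $F\subseteq E(G)$ one has $L(G)[F]=L(H)$, where $H$ is the subgraph of $G$ with edge set $F$. Thus the family $\{\dl(G): G \text{ a graph}\}$ is closed under passing to induced subcomplexes. Consequently, to prove that $\dl(G)$ is $3$-Leray it suffices to establish the single homological statement
\[
\tilde{H}_i(\dl(G);\mathbb{F})=0 \quad\text{for all } i\geq 3 \text{ and every graph } G .
\]
I would prove this by induction on the number of edges of $G$, the base case $E(G)=\emptyset$ being trivial since then $\dl(G)$ has no vertices.

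For the inductive step, fix an edge $e=(a,b)$, regarded as a vertex of $\dl(G)$. Writing the complex as the union of the deletion and the closed star of $e$, I would use
\[
\dl(G)=\mathrm{del}(e)\cup\mathrm{star}(e),\qquad \mathrm{del}(e)\cap\mathrm{star}(e)=\lk(e),
\]
where $\mathrm{del}(e)=\dl(G-e)$ has one fewer edge, and $\mathrm{star}(e)$ is the cone over $\lk(e)$ with apex $e$, hence contractible. The Mayer--Vietoris sequence then expresses $\tilde{H}_\ast(\dl(G))$ in terms of $\tilde{H}_\ast(\dl(G-e))$ and $\tilde{H}_\ast(\lk(e))$, so everything reduces to understanding the link.

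The heart of the argument, and the step I expect to be the main obstacle, is the identification of $\lk_{\dl(G)}(e)$. Its vertices are the edges of $G$ (other than $e$) incident to $a$ or $b$; let $A$ be those through $a$ and $B$ those through $b$. All edges in $A$ pairwise intersect at $a$, so $A$ spans a full simplex $\Delta_A$, and likewise $B$ spans $\Delta_B$. A mixed face can occur only when an edge $(a,x)\in A$ and an edge $(b,x)\in B$ share their second endpoint, i.e.\ when $x$ is a common neighbour of $a$ and $b$; and in that case the only mixed face is the single edge $\{(a,x),(b,x)\}$. Hence $\lk(e)$ is obtained from the two contractible simplices $\Delta_A$ and $\Delta_B$ by joining them with one edge for each common neighbour of $a$ and $b$. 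Collapsing $\Delta_A$ and $\Delta_B$ to points shows that $\lk(e)$ is homotopy equivalent to a wedge of $|N_G(a)\cap N_G(b)|-1$ circles, with the degenerate cases giving $S^0$ or a point. In particular $\tilde{H}_i(\lk(e);\mathbb{F})=0$ for every $i\geq 2$.

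Feeding this into the Mayer--Vietoris sequence, for each $i\geq 3$ both $\tilde{H}_i(\lk(e))$ and $\tilde{H}_{i-1}(\lk(e))$ vanish, which forces $\tilde{H}_i(\dl(G))\cong\tilde{H}_i(\dl(G-e))$. By the inductive hypothesis the right-hand side is zero, completing the induction and hence the proof that $\dl(G)$ is $3$-Leray. Equivalently, one could phrase the inductive step through the standard bound stating that the Leray number of a complex is at most the maximum of the Leray number of $\mathrm{del}(v)$ and one plus the Leray number of $\lk(v)$, using that every induced subcomplex of $\lk(e)$ has the same two-simplices-plus-matching shape and is therefore $2$-Leray. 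The only delicate point is the link computation above, which rests on the classification of the faces of $\dl(G)$ meeting a fixed edge; once that is in hand, the induction is routine.
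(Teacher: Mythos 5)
Your proposal is correct, but there is no internal proof to compare it against: the paper does not prove \Cref{thm:leray} at all --- it imports the statement from Holmsen and Lee \cite{HL2020}, where it is the $r=2$ case of their general theorem that $NM_r(G)$ is $(3r-3)$-Leray. Judged on its own, your argument is a sound, self-contained proof of this special case. The reduction to a single vanishing statement via $L(G)[F]=L(G[F])$ is valid; the link computation, which you rightly identify as the crux, is correct because a pairwise-intersecting family of edges is either a star or a triangle, so the only mixed faces of $\lk(e)$ for $e=(a,b)$ are indeed the pairs $\{(a,x),(b,x)\}$ with $x\in N_G(a)\cap N_G(b)$, making $\lk(e)$ two simplices joined by a matching, with $\tilde{H}_i(\lk(e))=0$ for $i\geq 2$; and the Mayer--Vietoris induction then closes (the one case you leave implicit, $\lk(e)=\emptyset$, is trivial since then $\dl(G)=\dl(G-e)\sqcup\{\mathrm{pt}\}$). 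It is worth pointing out, though, that the paper's own machinery yields a shorter proof of this case: by \Cref{lem:2skel}, $\dl(H)\simeq(\Delta(H))^{(2)}$ for every graph $H$, a complex of dimension at most $2$, and since every induced subcomplex of $\dl(G)$ is $\dl(G[F])$ for some $F\subseteq E(G)$ (your own first observation), every induced subcomplex has vanishing reduced homology in degrees $\geq 3$, which is exactly $3$-Lerayness. Your del/star/link induction costs more work but buys more --- an explicit description of all vertex links, and a template closer in spirit to the structural arguments needed for general $r$ --- whereas the nerve-lemma route is immediate but specific to $r=2$.
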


%This is a good place to point out that the Leray property has been studied in other setups as well. For instance, the clique complex of complement graphs are considered for the same in \cite{BC18}. 

It is worth mentioning here that the Lerayness of the clique complex of the complement of a claw-free graph has also been studied in \cite{BC18} and \cite{Nevo}.
% Nevo in (citation) in 2010.  (His results were then generalised by ---- in cite --- in the year 2018.)

\subsection{(Homotopy) Pushout}
	
   Let $X,Y,Z$ be topological spaces, and $p: X \to Y$ and $q: X \to Z$ be continuous maps. The  {\it pushout} of the diagram $Y \xleftarrow{p}  X \xrightarrow{q} Z$ is the space $$\Big(Y \bigsqcup   Z \Big)/ \sim,$$ where $\sim$ denotes  the equivalence relation $p(x) \sim q(x)$, for $x \in X$.

   The {\it homotopy  pushout} of  $Y \xleftarrow{p}  X \xrightarrow{q} Z$ 
  is the space $\big(Y \sqcup ( X \times [0,1] ) \sqcup  Z\big)/ \sim$, where 
  $\sim$ denotes  the equivalence relation $(x,0) \sim p(x)$, and $(x,1) \sim q(x)$, for $x \in X$. It can be shown that homotopy pushouts of any two homotopy equivalent diagrams are homotopy equivalent.

  \begin{rmk}\label{rmk:homotopypushout}
      If spaces are simplicial complexes and maps are subcomplex  inclusions, then their homotopy pushout and pushout spaces are equivalent up to homotopy. For elaborate discussion on this, we refer interested reader to \cite[Chapter 7]{bredon}.
  \end{rmk}

\begin{rmk} \label{rmk:pushout1}
    Consider a diagram $Y \xleftarrow{p}  X \xrightarrow{q} Z$ and let $W$ be its pushout object.  If $p,q$ are null-homotopic, then $W \simeq Y \bigvee Z \bigvee \Sigma(X)$, where $\simeq$  and  $\bigvee$ respectively denotes homotopy equivalence and the wedge of topological spaces; and $\Sigma(X)$ denotes the suspension of $X$.
\end{rmk}

% A {\it matching} in a graph $G$ is  a collection  of  pairwise disjoint edges of $G$.  The matching number $\nu(G)$ of $G$ is the size of a largest matching in $G$. 
% \begin{defn}
% 	The {\it $k$ non-matching complex},
% 	$NM_k(G) $ of a graph $G$ is the simplicial complex, whose vertices are the edges of $G$ and  $\sigma \subseteq E(G)$ is a simplex of $NM_k(G)$ if and only if the $\nu(G[\sigma]) < k$, where $G[\sigma]$ is the subgraph of $G$ induced by edges of $\sigma$.
% \end{defn}

\subsection{Simplicial Collapse}

Let  $\K$ be a simplicial complex and $\tau, \sigma \in \K$ be such that
$\sigma \subsetneq \tau$ and  $\tau$ is the only maximal simplex in $\K$ that contains $\sigma$.
  A  {\it simplicial collapse} of $\K$ is the simplicial complex $\mathcal{L}$ obtained from $\K$ by
  removing all those simplices $\gamma$  of $\K$ such that
  $\sigma \subseteq \gamma \subseteq \tau$. Here $\sigma$ is called a {\it free face} of
  $\tau$ and $(\sigma, \tau)$ is called a {\it collapsible pair}. We denote this collapse
  by $\K \searrow \mathcal{L}$. A complex is called {\it collapsible} if it collapses onto a point by applying a sequence of simplicial collapses. 
  It is a simple observation that, if 
  $\K \searrow \mathcal{L}$ then $\K \simeq \mathcal{L}$ (in fact, $\K$ deformation retracts onto $\mathcal{L}$).
 Throughout this article, we write $\K \searrow  \langle A_1,A_2,\dots,A_r \rangle$ to mean that $\K$ collapses onto a subcomplex whose faces are $A_1,\dots, A_r$ and all its subsets. 

We now give a result about collapsing which will be used throughout this article. To the best of our knowledge, this result is not present in writing anywhere in literature.

  \begin{lem}\label{lemma:chordalcollapsing2}
  Let $\K$ be a simplicial complex and $\sigma$ be a facet of $\K$. Let $A, B, C \subset \sigma$ be such that $A, B \neq \emptyset$ and $\sigma=A \sqcup B \sqcup C$. For each $a\in A$ and $b \in B$, let $\{a,b\}$ be a free face of $\sigma$ in $\K$. 
  \begin{enumerate}
      \item If $C \neq \emptyset$, then $\sigma \searrow \langle \sigma\setminus A, \sigma\setminus B\rangle$.
      \item If $C = \emptyset$, $|A|=1$ and $|B| \geq 2$, then for each $a\in A$ and $b \in B$, $\sigma\searrow \langle B,\{a,b\}\rangle$. 
      \item If $C = \emptyset$ and $|A|, |B| \geq 2$, then for each $a\in A$ and $b \in B$, $\sigma\searrow \langle A,B,\{a,b\} \rangle$.
  \end{enumerate}
  \end{lem}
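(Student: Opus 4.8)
The plan is to produce a \emph{single} explicit sequence of elementary collapses that handles all three cases at once; the case distinction will only enter at the very end, when we read off which subcomplex survives. Write $A=\{a_1,\dots,a_p\}$ and $B=\{b_1,\dots,b_q\}$, and call a face $\gamma\subseteq\sigma$ \emph{crossing} if $\gamma\cap A\neq\emptyset$ and $\gamma\cap B\neq\emptyset$; let $\mathcal D$ be the collection of crossing faces. A face of $\sigma$ is non-crossing exactly when it avoids $A$ or avoids $B$, i.e.\ when it is a subset of $\sigma\setminus A$ or of $\sigma\setminus B$, so the non-crossing faces form precisely the subcomplex $\langle\sigma\setminus A,\sigma\setminus B\rangle$. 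Thus part (1) amounts to collapsing away all of $\mathcal D$, and parts (2),(3) amount to collapsing away all of $\mathcal D$ except a single crossing edge. All collapses will be carried out inside $\K$ but will only remove faces contained in $\sigma$, which is what the notation $\sigma\searrow\langle\,\cdot\,\rangle$ records.

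The engine of the argument is a partition of $\mathcal D$ into order intervals. For $\gamma\in\mathcal D$ set $i(\gamma)=\min\{i:a_i\in\gamma\}$ and $j(\gamma)=\min\{j:b_j\in\gamma\}$, and for $1\le i\le p$, $1\le j\le q$ put $G_{ij}=\sigma\setminus(\{a_1,\dots,a_{i-1}\}\cup\{b_1,\dots,b_{j-1}\})$. A direct check shows that $\gamma\mapsto(i(\gamma),j(\gamma))$ gives a partition $\mathcal D=\bigsqcup_{i,j}[\{a_i,b_j\},G_{ij}]$, where $[\alpha,\beta]$ is the set of faces between $\alpha$ and $\beta$: indeed $\gamma\in[\{a_i,b_j\},G_{ij}]$ says exactly that $a_i,b_j\in\gamma$ while $\gamma$ avoids $a_1,\dots,a_{i-1},b_1,\dots,b_{j-1}$, i.e.\ $i(\gamma)=i$ and $j(\gamma)=j$. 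The point is that each such interval is precisely the set of faces deleted by one elementary collapse of the pair $(\{a_i,b_j\},G_{ij})$, so the whole lemma reduces to performing these $pq$ collapses in a legitimate order.

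First I would record a consequence of the hypothesis: since every crossing face contains some edge $\{a_i,b_j\}$, and each such edge lies in the unique facet $\sigma$, no facet of $\K$ other than $\sigma$ ever meets $\mathcal D$; hence all crossing faces live only inside $\sigma$, and our collapses can only expose new facets that are again subsets of $\sigma$. I would then process the pairs $(i,j)$ in order of increasing $i+j$ (ties broken arbitrarily). The step I expect to be the crux is checking that, when $(i,j)$ is reached, $G_{ij}$ is already a facet and $\{a_i,b_j\}$ is free with $G_{ij}$ its unique facet. This I would establish in three observations: (a) collapsing the interval of $(i',j')$ exposes $G_{i'j'}\setminus\{a_{i'}\}=G_{i'+1,j'}$ and $G_{i'j'}\setminus\{b_{j'}\}=G_{i',j'+1}$ as facets, so having processed $(i-1,j)$ or $(i,j-1)$ earlier (both of smaller index-sum, with $G_{11}=\sigma$ a facet from the start) makes $G_{ij}$ a facet; (b) every proper coface of $G_{ij}$ inside $\sigma$ is crossing with strictly smaller index-sum, hence already deleted, so $G_{ij}$ is maximal; and (c) the faces still containing $\{a_i,b_j\}$ are exactly those of the not-yet-removed interval $[\{a_i,b_j\},G_{ij}]$, whose top is $G_{ij}$. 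The increasing-$i+j$ order is what guarantees the prerequisites in (a) and (b), and the freeness hypothesis is what rules out interference from facets outside $\sigma$.

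Finally I would read off the three conclusions. The pair $(\{a_i,b_j\},G_{ij})$ is a genuine collapsible pair, i.e.\ $\{a_i,b_j\}\subsetneq G_{ij}$, unless $i=p$, $j=q$ and $C=\emptyset$, in which case $G_{pq}=\{a_p,b_q\}$ and this last edge cannot be collapsed. Hence if $C\neq\emptyset$ all $pq$ collapses succeed, exactly $\mathcal D$ is removed, and the survivor is $\langle\sigma\setminus A,\sigma\setminus B\rangle$, giving (1). If $C=\emptyset$ every collapse except the final one succeeds, so the single edge $\{a_p,b_q\}$ remains and the survivor is $\langle\sigma\setminus A,\sigma\setminus B\rangle\cup\{\{a_p,b_q\}\}=\langle A,B,\{a_p,b_q\}\rangle$; since the orderings of $A$ and $B$ are arbitrary, any prescribed $a\in A$, $b\in B$ may be placed last, yielding $\langle A,B,\{a,b\}\rangle$, which is (3), and which coincides with $\langle B,\{a,b\}\rangle$ when $|A|=1$ because then $A=\{a\}\subseteq\{a,b\}$, giving (2). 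The only real obstacle is the bookkeeping of the previous paragraph ensuring the elementary collapses stay legitimate throughout the prescribed order; once that is in place, the rest is direct verification.
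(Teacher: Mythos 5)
Your proof is correct, and at the level of individual collapses it removes exactly the same faces as the paper's proof: your pairs $(\{a_i,b_j\},\,G_{ij})$ with $G_{ij}=\sigma\setminus(\{a_1,\dots,a_{i-1}\}\cup\{b_1,\dots,b_{j-1}\})$ are precisely the collapsible pairs used in the paper, and the paper's order (fix $a_1$ and run through $b_1,\dots,b_q$, then $a_2$, etc., i.e.\ lexicographic in $(i,j)$) and your order (increasing $i+j$) are both linear extensions of the only constraint that matters, namely that $(i,j)$ be preceded by every $(i',j')\neq(i,j)$ with $i'\le i$ and $j'\le j$. What is genuinely different is the architecture. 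The paper tracks the surviving subcomplex after each collapse and argues by nested inductions, with the three cases separated from the outset; you instead partition the crossing faces into the $pq$ intervals $[\{a_i,b_j\},G_{ij}]$ via $\gamma\mapsto(i(\gamma),j(\gamma))$ up front, verify once that the scheduled collapses are legal (your observations (a)--(c), together with the correct remark that freeness of the edges confines every crossing face and all its cofaces to $\sigma$, so maximality of $G_{ij}$ need only be checked inside $\sigma$), and read off all three conclusions at the end from the single degeneration $G_{pq}=\{a_p,b_q\}$ when $C=\emptyset$. This buys a uniform one-pass treatment in which the case analysis shrinks to one final observation, and it makes the removed faces explicit as a partition into intervals --- essentially an acyclic matching in the sense of discrete Morse theory --- whereas the paper's sequential bookkeeping obscures this structure but requires less up-front verification. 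Your handling of the endgame (placing a prescribed $a\in A$, $b\in B$ last by reordering, and noting $\langle A,B,\{a,b\}\rangle=\langle B,\{a,b\}\rangle$ when $|A|=1$) is also correct, so the argument is complete as proposed.
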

  
  \begin{proof}
Let $A= \{a_1, \ldots, a_p\}$ and $B= \{b_1, \ldots, b_q\}$. Without loss of generality, we can assume that $a_p=a$ and $b_q=b$. 

\begin{enumerate}
\item We first do collapsing using the vertex $a_1$ from set $A$ and then use similar arguments for the rest. Since $\{a_1, b_1\}$ is a free face of $\sigma$, $\sigma \searrow \langle \sigma \setminus \{b_1\}, \sigma \setminus \{a_1\}\rangle$. Now $(\{a_1, b_2 \}, \sigma \setminus \{b_1\})$ is a collapsible pair and therefore $\sigma \setminus \{b_1\} \searrow \langle \sigma \setminus \{b_1, b_2\}, \sigma \setminus \{a_1, b_1\}\rangle$. Hence $\sigma \searrow \langle \sigma \setminus \{b_1, b_2\}, \sigma \setminus \{a_1\} \rangle$. Inductively, assume that for some $1 \leq j < q$, $\sigma \searrow  \langle\sigma\setminus \{b_1, \ldots, b_j\}, \sigma \setminus \{a_1\} \rangle$.  Now $(\{a_1, b_{j+1}\},  \sigma \setminus \{b_1, \ldots, b_j\})$ is a collapsible pair and therefore $\sigma \searrow \langle \sigma \setminus \{b_1, \ldots, b_j, b_{j+1}\}, \sigma \setminus \{a_1\}\rangle$. Using induction, we get that $\sigma \searrow  \langle \sigma\setminus \{b_1, \ldots, b_{q}\}= \sigma \setminus B, \sigma \setminus \{a_1\}\rangle$. 

If $p=1$, then this completes the proof, otherwise we proceed as follows:

By doing similar collapsing using vertices $a_2, a_3 ,\dots, a_{p-1}$ from set $A$, we get that $\sigma \searrow  \langle \sigma \setminus B, \sigma \setminus \{a_1,\dots, a_{p-1}\}\rangle$. Now $(\{a_{p}, b_1\}, \sigma \setminus \{a_1, \ldots, a_{p-1}\})$ is a collapsible pair and therefore
	$\sigma \setminus \{a_1, \ldots, a_{p-1}\} \searrow \langle \sigma \setminus \{a_1, \ldots, a_{p-1}, a_{p}\} = \sigma \setminus A, \sigma \setminus \{a_1, \ldots, a_{p-1}, b_1\}\rangle$. Thus $\sigma\searrow \langle \sigma \setminus B, \sigma \setminus A, \sigma \setminus \{a_1, \ldots, a_{p-1}, b_1\}\rangle$.
	We now show that $\sigma \setminus \{a_1, \ldots, a_{p-1}, b_1\}\searrow \langle \sigma \setminus A, \sigma \setminus B\rangle$.
	
	Since $C \neq \emptyset$,	it is easy to observe that by using collapsible pairs in the following order:
	$$(\{a_{p}, b_2\}, \sigma \setminus \{a_1, \ldots, a_{p-1}, b_1\}), \ldots, (\{a_{p}, b_{q}\}, \sigma \setminus \{a_1, \ldots, a_{p-1}, b_1, \ldots, b_{q-1}\}),$$
	
	and applying the collapses, we get  that $\sigma \setminus \{a_1, \ldots, a_{p-1}, b_1\} \searrow \langle \sigma \setminus \{a_1, \ldots, a_{p-1}, $ $a_p,  b_1, \ldots, b_{q-1}\}, \sigma \setminus \{a_1, \ldots, a_{p-1},$ $ b_1,  \ldots, b_{q-1}, b_q \} \rangle$.
	Since $\sigma \setminus \{a_1, \ldots, a_{p-1},a_p,  b_1, \ldots$ $,  b_{q-1}\} \subseteq \sigma \setminus A$ and $\sigma \setminus \{a_1, \ldots,  a_{p-1},   b_1, \ldots, b_{q-1}, b_q \} \subseteq \sigma \setminus B$, we get that $\sigma \searrow \langle \sigma \setminus A, \sigma \setminus B\rangle$.
	
%	But $\sigma \setminus \{a_1, \ldots, a_{p-1},a_p,  b_1, \ldots, b_{q-1}\} \subset \sigma \setminus A $ and  $ \sigma \setminus \{a_1, \ldots, a_{p-1}, b_1, \ldots, b_{q-1}, b_q \} \subset \sigma \setminus B$, implying that  $\sigma \searrow \sigma \setminus A, \sigma \setminus B$.

	\item  Here $\sigma = \{a, b_1, \ldots, b_q\}$. Using similar arguments as in the first paragraph of previous case and using the collapsible pairs in the following order:
	$$
	(\{a, b_1\}, \sigma), (\{a, b_2\}, \{a, b_2, \ldots, b_q\}), \ldots, (\{a, b_{q-1}\}, \{a,b_{q-1}, b_q\}),
	$$ and doing the collapses, we get the desired result.

	\item Using similar arguments as in the proof of case $1$, we get that $\sigma \searrow \langle\sigma\setminus B, \sigma \setminus \{a_1,\dots,a_{p-1}\}\rangle$. Observe that $\sigma\setminus \{a_1,\dots,a_{p-1}\} = \{a_p,b_1,\dots,b_q\}$. We now use case $2$ to collapse $\sigma\setminus \{a_1,\dots,a_{p-1}\}$ and get the result.  
\end{enumerate}
  \vspace{-0.8cm}
  \end{proof}
  
  Induction along with \Cref{lemma:chordalcollapsing2} gives the following result.

\begin{cor} \label{lemma:generalcollapsing} Let $\K$ be a simplicial complex and let $\sigma = A_1 \sqcup A_2 \ldots \sqcup A_k \sqcup C $ be a facet of $\K$, where $A_j = \{a_1^j, \ldots, a_{l_j}^j\}$ for $1 \leq j \leq k$. Let $\{a_s^i, a_t^j\}$ be a free face of $\sigma$ for each $i \neq j$, $s \in [l_i]$ and $t \in [l_j]$. 
	\begin{enumerate}
	  %  \item If $C=\emptyset$, then $\sigma \searrow \langle A_1, \ldots, A_k \rangle \cup \{\{a_{l_i}^i, a_{l_k}^k\} : 1 \leq i \leq k-1\}\rangle$.
	    
	    \item If $C=\emptyset$, then $\sigma \searrow \langle A_1, \ldots, A_k, \{a_{l_1}^1, a_{l_k}^k\}, \{a_{l_2}^2, a_{l_k}^k\}, \dots, \{a_{l_{k-1}}^{k-1}, a_{l_k}^k\}\rangle$.
	    
	    \item If $C\neq \emptyset$, then $\sigma \searrow \langle A_1 \sqcup C, \dots, A_k \sqcup C \rangle$.
	\end{enumerate}
	\end{cor}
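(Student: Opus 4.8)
The plan is to prove both items by induction on the number $k$ of blocks, with \Cref{lemma:chordalcollapsing2} supplying both the base case and the single collapse that lowers the number of blocks by one. The mechanism is identical in both cases: regard $A_2 \sqcup \cdots \sqcup A_k$ as a single block, apply \Cref{lemma:chordalcollapsing2} to peel off $A_1$, and then run the inductive hypothesis on the surviving facet (which is $A_2 \sqcup \cdots \sqcup A_k$, together with $C$ when $C\neq\emptyset$).

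I would treat the case $C \neq \emptyset$ (item 2) first, as it is cleaner. For $k = 2$ it is exactly \Cref{lemma:chordalcollapsing2}(1) with $A = A_1$, $B = A_2$ and the same $C$, giving $\sigma \searrow \langle A_2 \sqcup C,\ A_1 \sqcup C\rangle$. For the inductive step with $k \geq 3$, I would set $A = A_1$ and $B = A_2 \sqcup \cdots \sqcup A_k$; the hypothesis that $\{a_s^i,a_t^j\}$ is free for all $i\neq j$ ensures in particular that $\{a,b\}$ is free for every $a\in A_1$, $b\in B$, so \Cref{lemma:chordalcollapsing2}(1) yields $\sigma \searrow \langle A_2 \sqcup \cdots \sqcup A_k \sqcup C,\ A_1 \sqcup C\rangle$. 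The first face is a facet to which the inductive hypothesis applies, producing $A_2 \sqcup \cdots \sqcup A_k \sqcup C \searrow \langle A_2 \sqcup C, \ldots, A_k \sqcup C\rangle$; composing the two collapses gives the claim.

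The case $C = \emptyset$ (item 1) follows the same pattern, except that the peeled block must be re-attached by a single edge. For $k = 2$ I would split on $|A_1|,|A_2|$: if both are $\geq 2$, \Cref{lemma:chordalcollapsing2}(3) with $a = a_{l_1}^1$, $b = a_{l_2}^2$ gives $\sigma \searrow \langle A_1, A_2, \{a_{l_1}^1, a_{l_2}^2\}\rangle$; if exactly one block is a singleton, \Cref{lemma:chordalcollapsing2}(2) gives the same subcomplex, since the singleton is absorbed into the new edge; and if both are singletons $\sigma$ is already that edge. For $k \geq 3$, peeling off $A = A_1$ against $B = A_2 \sqcup \cdots \sqcup A_k$ (which has at least $k-1 \geq 2$ vertices) via \Cref{lemma:chordalcollapsing2}(2) or (3)---according to whether $|A_1| = 1$ or $|A_1| \geq 2$, and always selecting the edge $\{a_{l_1}^1, a_{l_k}^k\}$---produces $\sigma \searrow \langle A_1,\ A_2 \sqcup \cdots \sqcup A_k,\ \{a_{l_1}^1, a_{l_k}^k\}\rangle$. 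Applying the inductive hypothesis to the facet $A_2 \sqcup \cdots \sqcup A_k$, whose distinguished last block is still $A_k$, then creates the remaining edges $\{a_{l_j}^j, a_{l_k}^k\}$ for $2 \le j \le k-1$, and assembling the collapses gives the stated subcomplex.

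The one point that needs genuine care---the main obstacle---is that each application of the inductive hypothesis requires the free-face hypotheses to survive the preceding collapse. I would argue this uniformly: that $\{a_s^i, a_t^j\}$ is a free face of $\sigma$ in $\K$ means $\sigma$ is the \emph{unique} facet of $\K$ containing it. A collapse coming from \Cref{lemma:chordalcollapsing2} only deletes faces of $\sigma$ and replaces $\sigma$ by the listed generating faces; for $i,j \geq 2$ the pair $\{a_s^i, a_t^j\}$ lies neither in the detached block $A_1$ (respectively $A_1 \sqcup C$) nor in the newly created edge, and it cannot lie in any other original facet, for otherwise $\sigma$ would not have been unique. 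Hence the surviving facet remains the unique facet containing $\{a_s^i, a_t^j\}$, so this pair is still free and the induction is legitimate.
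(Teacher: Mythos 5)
Your proof is correct and follows essentially the same route as the paper, which disposes of this corollary with the one-line remark that it follows by induction together with \Cref{lemma:chordalcollapsing2}; your block-peeling induction (treating $A_2 \sqcup \cdots \sqcup A_k$ as a single block $B$ and choosing the edge $\{a_{l_1}^1, a_{l_k}^k\}$ at each peel) is exactly that intended argument made explicit. Your final paragraph verifying that the free-face hypotheses persist after each collapse fills in the one detail the paper leaves implicit, and it does so correctly.
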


\section{Structural properties of \texorpdfstring{$\dl$}{}} \label{sec:functor}

In this section, we first analyze $\dl(G)$ as the $2$-skeleton of the clique complex of $G$ and then  we use it to compute the homotopy type of $\dl(G)$ for various classes of graphs including triangle free graphs and chordal graphs. Later we realize $\dl$ as a functor from the category of graphs to the category of simplicial complexes and compute the  homotopy type of $\dl(G)$, when $G$ is a complete multipartite graph, using the functoriality of $\dl$.

 The {\it nerve} of a family of sets $(A_i)_{i \in I}$  is the simplicial complex $\mathbf{N} = \mathbf{N}(\{A_i\})$ defined on the vertex set $I$ so that a finite subset $\sigma \subseteq I$ is in $\mathbf{N}$ precisely when $\bigcap\limits_{i \in \sigma} A_i \neq \emptyset$.

\begin{thm}[Theorem 10.6 (i), \cite{bjorner}]\label{thm:nerve}
	Let $\K$ be a simplicial complex and $(\K_i)_{i \in I}$ be a family of subcomplexes such that $\K = \bigcup\limits_{i \in I} \K_i$.
Suppose every nonempty finite intersection $\K_{i_1} \cap \ldots \cap \K_{i_t}$ for $i_j \in I, t \in \mathbb{N}$ is contractible, then $\K$ and $\mathbf{N}(\{\K_i\})$ are homotopy equivalent.
		
	\end{thm}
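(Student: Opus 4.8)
The plan is to prove the statement by reducing to a finite index set and then inducting on its cardinality, using the homotopy pushout machinery recorded above. For the reduction, observe that both $\K$ and $\mathbf{N}$ are directed unions of their finite pieces: $\K=\bigcup_{J\subseteq I \text{ finite}}\big(\bigcup_{i\in J}\K_i\big)$, and every face of $\mathbf{N}$ is a finite set, so $\mathbf{N}=\bigcup_{J\subseteq I \text{ finite}}\mathbf{N}(\{\K_i\}_{i\in J})$. Since these unions are along subcomplex inclusions, a homotopy or homology class is detected by a compact subset meeting only finitely many $\K_i$, and it suffices to prove the theorem for $I=\{1,\dots,n\}$ finite and then pass to the (filtered) limit. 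I would therefore first settle the finite case and return to the compatibility needed for the limit at the end.

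For $n=1$ we have $\K=\K_1$ contractible and $\mathbf{N}$ a single vertex, so both are points. For the inductive step, set $A=\K_1\cup\cdots\cup\K_{n-1}$ and $B=\K_n$, so that $\K=A\cup B$ and $A\cap B=\bigcup_{j<n}(\K_j\cap\K_n)$. As all of these are subcomplexes, \Cref{rmk:homotopypushout} presents $\K$ as the homotopy pushout of $A\hookleftarrow A\cap B\hookrightarrow B$. I would now identify each corner with a piece of the nerve. The family $(\K_j)_{j<n}$ covers $A$ and inherits the hypotheses, so by induction $A\simeq\mathbf{N}_A$, the induced subcomplex of $\mathbf{N}$ on $\{1,\dots,n-1\}$. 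The family $(\K_j\cap\K_n)_{j<n}$ covers $A\cap B$, and since $\bigcap_{j\in\sigma}(\K_j\cap\K_n)=\bigcap_{i\in\sigma\cup\{n\}}\K_i$ is contractible whenever nonempty, induction gives $A\cap B\simeq\mathbf{N}(\{\K_j\cap\K_n\}_{j<n})=\lk_{\mathbf{N}}(n)$. Finally $B=\K_n$ is contractible. On the nerve side, $\mathbf{N}=\mathbf{N}_A\cup\mathcal{C}_n(\lk_{\mathbf{N}}(n))$ with $\mathbf{N}_A\cap\mathcal{C}_n(\lk_{\mathbf{N}}(n))=\lk_{\mathbf{N}}(n)$, so $\mathbf{N}$ is the homotopy pushout of $\mathbf{N}_A\hookleftarrow\lk_{\mathbf{N}}(n)\hookrightarrow\mathcal{C}_n(\lk_{\mathbf{N}}(n))$, the last corner being a contractible cone. (The degenerate case $A\cap B=\emptyset$ corresponds to $\lk_{\mathbf{N}}(n)=\emptyset$ and is handled identically.)

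The two pushout diagrams now have homotopy equivalent corners, so $\K\simeq\mathbf{N}$ would follow from the invariance of homotopy pushouts under homotopy equivalence of diagrams recorded above. The main obstacle, and the only genuinely delicate point, is to check that the corner-wise equivalences assemble into an equivalence of the \emph{whole} diagrams, i.e. that the squares
\[
\begin{CD}
A\cap B @>>> A\\
@VVV @VVV\\
\lk_{\mathbf{N}}(n) @>>> \mathbf{N}_A
\end{CD}
\qquad\text{and}\qquad
\begin{CD}
A\cap B @>>> B\\
@VVV @VVV\\
\lk_{\mathbf{N}}(n) @>>> \mathcal{C}_n(\lk_{\mathbf{N}}(n))
\end{CD}
\]
commute up to homotopy. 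The second is automatic because its right-hand corners are contractible; for the first one must arrange that the inductive equivalence $A\simeq\mathbf{N}_A$ restricts, up to homotopy along the inclusions, to $A\cap B\simeq\lk_{\mathbf{N}}(n)$. I would resolve this (and simultaneously the compatibility required to pass to the filtered limit in the infinite case) by producing the equivalences \emph{functorially} rather than choosing them independently: carry along the ``blow-up'' (homotopy colimit) of the intersection diagram $\sigma\mapsto\bigcap_{i\in\sigma}\K_i$ indexed by the faces $\sigma$ of $\mathbf{N}$. Its projection to $\K$ is an equivalence because the cover is by subcomplexes, and its projection to $\mathbf{N}$ is an equivalence because each $\bigcap_{i\in\sigma}\K_i$ is contractible, so the intersection diagram is objectwise equivalent to the constant point diagram whose homotopy colimit is $\mathbf{N}$. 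This functorial formulation makes all the compatibilities automatic; the inductive pushout argument above is the hands-on shadow of it adapted to the tools developed in this section, and is essentially the proof of \cite{bjorner}.
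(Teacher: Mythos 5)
This statement is imported by the paper from the literature (Theorem 10.6(i) of Bj\"orner's Handbook chapter) and is stated there \emph{without proof}, so there is no internal argument to compare against; your proposal has to be judged on its own merits. On those merits it is essentially correct, and it is in fact the standard modern proof of the nerve theorem: the load-bearing step is the ``blow-up'' argument in your last paragraph, namely the zigzag $\K \longleftarrow \mathrm{hocolim}_{\sigma \in \mathbf{N}}\bigl(\bigcap_{i\in\sigma}\K_i\bigr) \longrightarrow \mathbf{N}$, where the left projection is an equivalence by the projection lemma (this is where the hypothesis that the cover is by \emph{subcomplexes}, i.e.\ that the diagram is cofibrant, is used) and the right projection is an equivalence by objectwise contractibility plus homotopy invariance of homotopy colimits, the target being the barycentric subdivision of $\mathbf{N}$. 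Two remarks on economy and completeness. First, once you have this functorial formulation, the finite reduction and the pushout induction are entirely superfluous: the zigzag works directly for arbitrary $I$, and its naturality is what you would have needed for the filtered-limit step anyway. Second, the induction \emph{as written} is incomplete for exactly the reason you flag --- independently chosen inductive equivalences $A \simeq \mathbf{N}_A$ and $A\cap B \simeq \lk_{\mathbf{N}}(n)$ need not make the left square homotopy-commute --- so without the hocolim patch it is only a heuristic. It is worth knowing, though, that this square can also be repaired more elementarily: the double mapping cylinder of a pushout-shaped diagram is invariant under maps of diagrams that are objectwise equivalences and commute only up to \emph{chosen} homotopies (absorb the homotopies into the cylinder coordinate and apply the gluing lemma), so for finite $I$ the inductive route can be completed without the full homotopy colimit machinery.

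The one caveat is that your proof invokes, rather than proves, the projection lemma and the homotopy invariance of homotopy colimits --- results of essentially the same depth as the nerve theorem itself. For a classical theorem that the paper likewise cites rather than proves, this is a reasonable division of labor, but you should state it as such: what you have written is a correct reduction of the nerve theorem to these two standard facts, not a self-contained argument. Minor points that do check out under scrutiny: the identifications $A\cap B = \bigcup_{j<n}(\K_j\cap\K_n)$, $\mathbf{N}(\{\K_j\cap\K_n\}_{j<n}) = \lk_{\mathbf{N}}(n)$, and $\mathbf{N} = \mathbf{N}_A \cup \mathcal{C}_n(\lk_{\mathbf{N}}(n))$ with intersection $\lk_{\mathbf{N}}(n)$ are all correct (including the degenerate cases $\K_n=\emptyset$ and $A\cap B=\emptyset$, given the paper's convention that $\emptyset$ is a face of every complex), your use of \Cref{rmk:homotopypushout} to pass between pushouts and homotopy pushouts of subcomplex inclusions is exactly what that remark licenses, and the second square is indeed automatically homotopy-commutative since its right-hand corners are contractible.
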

	\begin{rmk}\label{rem:nerve}
	Since any non-empty  intersection of simplices is again a simplex, \Cref{thm:nerve} implies that the nerve of the facets of a simplicial complex $\K$  is homotopy equivalent to $\K$. 
	\end{rmk}
%======================================
% In this section, we analyse $\dl$ as a functor from the category of graphs to the category of simplicial complexes. Before moving on to the functor theoretic properties of $\dl$, we prove some basic but useful results. We start with the result which lets us remove hanging edges from any graph $G$ without affecting the homotopy type of $\dl(G)$.

% \begin{prop}\label{prop:trinaglefreenerve}
%   Let $G$ be a triangle free graph. Then  $\dl(G)$ is homotopy equivalent to $G$ (considered as a $1$-dimensional simplicial complex).
% \end{prop}

% \begin{proof}
%     Observe that the set of facets of $\dl(G)$  is  $\{\sigma_a: a \in V(G)\}$, where $\sigma_a= \{(a, v) : v \in N_G(a)\}$. Let $\mathbf{N} = \mathbf{N}(\{\sigma_a\})$ be the nerve of the facets of the $\dl(G)$. Then for any $A \subseteq V(G)$, $A \in \mathbf{N}$ if and only if  $\bigcap\limits_{a \in A}\sigma_a \neq \emptyset$. It is easy to see that $A \in \mathbf{N}$ if and only if either $|A| =1$ or $|A|=2$ and $A \in E(G)$.  Thus $\mathbf{N} \cong G.$ Result follows from \Cref{rem:nerve}. 
% \end{proof}

%      \begin{cor}\label{prop:nm2oftrianglefree} Let $G$ be a triangle-free graph. Then
%   $$\dl(G) \simeq \bigvee\limits_{u(G)}\bS^1, $$
%   where $\bS^k$ denotes the $k$-dimensional sphere. In particular, for any tree $T$, $\dl(T)$ is contractible. 
% \end{cor}

\begin{lem}\label{lem:2skel}
    For any graph $G$, the complex $\dl(G)$ is homotopy equivalent to the $2$-skeleton of the clique complex of $G$, {\it i.e.}, $$\dl(G)\simeq (\Delta(G))^{(2)}.$$
\end{lem}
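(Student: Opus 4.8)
The plan is to realize $\dl(G)$ as the nerve of an explicit good cover and then collapse that nerve onto a stellar subdivision of $(\Delta(G))^{(2)}$. Throughout I assume $G$ has no isolated vertices (otherwise the two sides already differ on those vertices). First I would invoke the classical description of cliques in a line graph: a set of edges of $G$ is pairwise incident exactly when either all its members share a common vertex (a \emph{star}) or it is the edge set of a triangle of $G$. Consequently every face of $\dl(G)$ lies in one of the simplices $S_v := \{e \in E(G) : v \in e\}$ (a full simplex, for $v \in V(G)$) or equals a triangle $B_t := \{e_{xy}, e_{yz}, e_{xz}\}$ (for a triangle $t = xyz$ of $G$). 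I take $\mathcal{U} = \{S_v\}_v \cup \{B_t\}_t$ as a cover of $\dl(G)$. Each member is a simplex, hence contractible, and any nonempty intersection of members is again the full simplex on the common edges, hence contractible. \Cref{thm:nerve} then gives $\dl(G) \simeq \mathbf{N}(\mathcal{U}) =: N$.

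Next I would analyze $N$. Writing $A_v$ for the nerve vertex coming from $S_v$, a subcollection of $\mathcal{U}$ has a common member iff all its elements contain a common edge $e = xy$; thus the facets of $N$ are exactly $F(e) = \{A_x, A_y\} \cup \{B_t : e \subset t\}$, one for each edge $e$ (these are pairwise incomparable, since $A_x,A_y \in F(e')$ forces $e'=xy$). In particular the full subcomplex of $N$ on $\{A_v\}$ is the $1$-skeleton $G$ (an edge $\{A_x,A_y\}$ for each $xy \in E(G)$), and each triangle $t=xyz$ contributes the vertex $B_t$ with the cone from $B_t$ over the $3$-cycle $A_xA_yA_z$. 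The only faces preventing $N$ from being $2$-dimensional are the simplices $F(e)$ for edges $e$ lying in $m \ge 2$ triangles. For such an $e = xy$, the pairs $\{B_t, B_{t'}\}$ with $t,t' \supset e$ are free faces of $F(e)$, since two triangles through $e$ meet only along $e$; hence applying \Cref{lemma:generalcollapsing} with blocks $A_i = \{B_{t_i}\}$ and common part $C = \{A_x, A_y\}$ yields $F(e) \searrow \langle \{A_x, A_y, B_t\} : t \supset e \rangle$. These collapses for distinct edges are independent, because any removed face contains two $B$'s and therefore lies in a unique $F(e)$; carrying them out simultaneously collapses $N$ onto a $2$-dimensional complex $N'$.

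Finally I would identify $N'$: its surviving faces are exactly those containing at most one vertex $B_t$, namely the $1$-skeleton $G$ on $\{A_v\}$ together with, for each triangle $t = xyz$, the three $2$-cells $\{A_x, A_y, B_t\}$, $\{A_y, A_z, B_t\}$, $\{A_x, A_z, B_t\}$ forming the cone from $B_t$ over $A_xA_yA_z$. This is precisely the stellar subdivision of $(\Delta(G))^{(2)}$ obtained by starring each triangular face at its barycenter, which is homeomorphic to $(\Delta(G))^{(2)}$. Chaining the steps gives $\dl(G) \simeq N \searrow N' \cong (\Delta(G))^{(2)}$, as desired.

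The routine parts are the structure theorem for line-graph cliques and the verification that $\mathcal{U}$ is a good cover. The step I expect to be the main obstacle is the handling of the ``over-resolved'' simplices $F(e)$ for edges contained in several triangles: one must check both that the collapse supplied by \Cref{lemma:generalcollapsing} reduces each such $F(e)$ to the correct fan of $2$-cells and that these collapses are globally compatible, so that the end product is genuinely the stellar subdivision and nothing further is lost.
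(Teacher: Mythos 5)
Your proposal is correct, and its skeleton---cover $\dl(G)$ by simplices, invoke the nerve theorem (\Cref{thm:nerve}), and identify the outcome with the complex obtained from $(\Delta(G))^{(2)}$ by starring each $2$-simplex at a barycenter---is the same as the paper's. But you go a step further than the paper, and this extra step is not redundant: the paper covers $\dl(G)$ by its facets (via \Cref{rem:nerve}) and then simply asserts that the resulting nerve is \emph{isomorphic} to the starred $2$-skeleton $\K$, whereas you correctly observe that the nerve fails to be $2$-dimensional whenever some edge $e$ lies in $m\geq 2$ triangles, because all cover members through $e$ pairwise (indeed simultaneously) intersect, producing the $(m+1)$-dimensional simplex $F(e)$. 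This defect is not an artifact of your larger cover (all stars plus all triangles, rather than only facets): the same simplices occur in the paper's nerve of facets. For instance, for $G=K_4$ minus an edge, the four facets of $\dl(G)$ all contain the common edge $xy$, so the nerve of facets is a solid $3$-simplex rather than the $2$-dimensional complex $\K$; the paper's claimed isomorphism $\K\cong \mathbf{N}(\mathcal{F})$ is therefore false in general (the lemma's conclusion survives, but the one-line justification does not). Your collapsing step repairs exactly this: you verify the freeness hypothesis of \Cref{lemma:generalcollapsing} (two distinct triangles through $e$ meet only in $e$, so $\{B_t,B_{t'}\}$ lies in no facet other than $F(e)$), you apply it with singleton blocks and $C=\{A_x,A_y\}$ to collapse $F(e)$ onto the fan of $2$-cells $\{A_x,A_y,B_t\}$, and you check global compatibility (every removed face contains two $B$-vertices and hence determines $e$, and two distinct triangles share at most one edge, so the collapses at distinct edges are disjoint). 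Your caveat about isolated vertices is likewise a genuine, if minor, correction to the statement as printed: for $G=K_2\sqcup K_1$ one has $\dl(G)\simeq \{\mathrm{pt}\}$ while $(\Delta(G))^{(2)}$ has two components, so the lemma implicitly assumes $G$ has no isolated vertices. In short: same route as the paper, executed with the care the paper omits, and your version is the one I would keep.
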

\begin{proof}
    Let $\mathcal{F}$ be the set of facets of $\dl(G)$. It is easy to observe that any $F\in \mathcal{F}$ is either a collection of three edges of $G$ forming a triangle in $G$ or of the form $\sigma_a= \{(a, v) : v \in N_G(a)\}$ for some $a \in V(G)$. From \Cref{rem:nerve}, $\mathbf{N}(\mathcal{F})\simeq \dl(G)$. It is therefore enough to show that $\mathbf{N}(\mathcal{F})\simeq (\Delta(G))^{(2)}$.
    
    Let $\K$ be a simplicial complex obtained from $(\Delta(G))^{(2)}$ by adding a barycenter to every $2$-simplex of $(\Delta(G))^{(2)}$. Clearly $\K\simeq (\Delta(G))^{(2)}$ and $\K \cong \mathbf{N}(\mathcal{F})$.
\end{proof}

 Let $v(G)$ and $e(G)$ denote the number of vertices and edges in a graph $G$, respectively. For a given graph $G$, define $u(G)=e(G)-e(T)$, where $T$ is a spanning tree of $G$. The following is an immediate consequence of \Cref{lem:2skel}.

\begin{cor}\label{prop:nm2oftrianglefree}
  Let $G$ be a triangle free graph. Then  $\dl(G)$ is homotopy equivalent to $G$ (considered as a $1$-dimensional simplicial complex). In particular  $$\dl(G) \simeq \bigvee\limits_{u(G)}\bS^1, $$ 
\end{cor}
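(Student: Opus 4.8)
The plan is to apply \Cref{lem:2skel}, which tells us that $\dl(G) \simeq (\Delta(G))^{(2)}$, and then exploit the triangle-free hypothesis. The key observation is that if $G$ is triangle free, then $\Delta(G)$ has no $2$-simplices: a $2$-simplex of the clique complex corresponds precisely to a triangle in $G$, and there are none. Consequently the $2$-skeleton $(\Delta(G))^{(2)}$ coincides with the $1$-skeleton $(\Delta(G))^{(1)}$, which is just the graph $G$ itself regarded as a $1$-dimensional simplicial complex (its vertices and edges). This immediately gives $\dl(G) \simeq G$, establishing the first assertion.

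For the ``in particular'' statement, I would determine the homotopy type of a connected graph viewed as a $1$-complex, and then reduce the general case to it. A connected graph $G$ deformation retracts onto a wedge of circles by collapsing a spanning tree $T$ to a point: since a tree is contractible, quotienting $G$ by $T$ yields a space homotopy equivalent to $G$, and this quotient is a wedge of $e(G) - e(T) = u(G)$ circles, one for each edge not in the spanning tree. Thus $\dl(G) \simeq G \simeq \bigvee_{u(G)} \bS^1$ when $G$ is connected. One clean way to phrase the tree-collapse is to note that $G \searrow$ (repeatedly remove leaf edges via free-face collapses, using that each leaf vertex is a free face of its incident edge) leaves a $1$-complex with no leaves, and a standard Euler-characteristic or first-homology count shows the rank of $H_1$ equals $u(G)$; since a graph is a $K(\pi,1)$ with free fundamental group, being a wedge of circles is forced by its homotopy type.

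If $G$ is disconnected, I would handle components separately. Writing $G$ as a disjoint union of connected components $G_1, \dots, G_c$, each $G_i$ is triangle free and contributes $\bigvee_{u(G_i)} \bS^1$ up to homotopy. The subtlety is that the definition $u(G) = e(G) - e(T)$ presupposes a spanning tree, which exists only for connected $G$; for the disconnected case the correct quantity is $e(G) - v(G) + c$, the first Betti number, which agrees with $u(G)$ when $c = 1$. Provided the paper intends $G$ connected (or interprets $u(G)$ as the cycle rank $e(G)-v(G)+c$), the disjoint union of wedges of circles is again a wedge of circles of the same total number, so $\dl(G) \simeq \bigvee_{u(G)} \bS^1$ as claimed.

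The main obstacle, such as it is, is not topological difficulty but bookkeeping: one must be careful that ``$G$ as a $1$-dimensional simplicial complex'' is genuinely identified with $(\Delta(G))^{(1)}$, and that the spanning-tree collapse is justified rather than merely asserted. The genuinely substantive input, \Cref{lem:2skel}, is already available, so the remaining work is the elementary fact that a connected graph is homotopy equivalent to a wedge of $u(G)$ circles --- a standard result for $1$-dimensional complexes that I would cite or prove in one line via collapsing a spanning tree.
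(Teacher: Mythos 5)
Your proof is correct and follows exactly the route the paper intends: the paper offers no separate argument, stating the corollary as an immediate consequence of \Cref{lem:2skel}, since triangle-freeness means $\Delta(G)$ has no $2$-simplices, so $(\Delta(G))^{(2)} = (\Delta(G))^{(1)} = G$, and the wedge-of-circles count is the standard spanning-tree collapse you supply. One caveat on your disconnected-case remark: a disjoint union of wedges of circles is \emph{not} homotopy equivalent to a single wedge of circles (the number of path components differs), so the ``in particular'' statement genuinely requires $G$ connected --- consistent with the paper's definition $u(G) = e(G) - e(T)$ via a spanning tree --- rather than being rescued by reinterpreting $u(G)$ as the cycle rank $e(G)-v(G)+c$.
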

 A graph $G$ is called {\it chordal} if it has no induced cycle of length greater than $3$, {\it i.e.}, each cycle of length more than $3$ has a chord. 
   \begin{thm}\label{thm:chordalmain}
  Let $G$ be a connected chordal graph.  Then $\dl (G)$ is homotopy equivalent to a wedge of $2$-dimensional spheres.
  \end{thm}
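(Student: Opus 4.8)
The plan is to pass from $\dl(G)$ to the $2$-skeleton of the clique complex via \Cref{lem:2skel}, prove that the \emph{full} clique complex $\Delta(G)$ is contractible using a perfect elimination ordering, and then read off the homotopy type of the $2$-skeleton from the fact that a simply connected $2$-dimensional complex is a wedge of $2$-spheres. By \Cref{lem:2skel} we have $\dl(G)\simeq(\Delta(G))^{(2)}$, so it suffices to understand $(\Delta(G))^{(2)}$.

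\emph{Step 1 (the full complex is contractible).} Since $G$ is chordal it admits a simplicial vertex $v$, i.e.\ one whose neighbourhood $N_G(v)$ is a clique, and $G-v$ is again connected and chordal (a simplicial vertex cannot be a cut vertex, as all of its neighbours are mutually adjacent). Because $v$ is simplicial, the unique maximal clique containing $v$ is $\{v\}\cup N_G(v)$; hence $\{v\}$ is a free face of the facet $\{v\}\cup N_G(v)$ in $\Delta(G)$. Collapsing this pair removes exactly the faces of $\Delta(G)$ containing $v$, so that $\Delta(G)\searrow \Delta(G-v)$. Iterating along a perfect elimination ordering collapses $\Delta(G)$ to a single vertex, so $\Delta(G)$ is collapsible and therefore contractible.

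\emph{Step 2 (transfer to the $2$-skeleton).} Since $\Delta(G)$ is contractible it is simply connected. The inclusion $(\Delta(G))^{(2)}\hookrightarrow\Delta(G)$ only adds cells of dimension $\geq 3$, so it induces an isomorphism on $\pi_1$; consequently $(\Delta(G))^{(2)}$ is simply connected. As $(\Delta(G))^{(2)}$ is a finite $2$-dimensional complex, its homology vanishes above degree $2$, $H_2$ is free (top-dimensional chains), and $H_1=0$ by the Hurewicz theorem. A simply connected $2$-complex with free homology concentrated in degree $2$ is homotopy equivalent to a wedge of $2$-spheres: choose a map from $\bigvee_{\operatorname{rank} H_2}\bS^2$ realizing a basis of $H_2$, check it is a homology isomorphism between simply connected spaces, and conclude by Whitehead's theorem. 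This yields $\dl(G)\simeq(\Delta(G))^{(2)}\simeq\bigvee\bS^2$ (an empty wedge, i.e.\ a point, precisely when $G$ is a tree).

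\emph{Main obstacle.} The tempting shortcut---collapsing the $2$-skeleton directly, vertex by vertex, so as to mimic Step~1 one dimension lower---fails. Deleting a simplicial vertex $v$ from $(\Delta(G))^{(2)}$ would require collapsing away its open star, whose link is the full $1$-skeleton on $N_G(v)$, i.e.\ the complete graph $K_{|N_G(v)|}$; for $|N_G(v)|\geq 3$ this is a wedge of circles and is not collapsible, so $(\Delta(G))^{(2)}$ does not collapse onto $(\Delta(G-v))^{(2)}$. This is exactly why the argument lifts to the full complex to obtain contractibility and then pushes simple connectivity back down to the $2$-skeleton, rather than attempting a skeletal induction or a Mayer--Vietoris/homotopy-pushout decomposition (where tracking simple connectivity across the non-contractible intersections would be the delicate point). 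One could in principle replace Step~1 by a direct application of \Cref{lemma:chordalcollapsing2} and \Cref{lemma:generalcollapsing} to the facets $\{v\}\cup N_G(v)$, but the homotopy-theoretic route above is the cleanest way to reach the stated conclusion.
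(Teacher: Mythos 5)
Your proposal is correct and follows essentially the same route as the paper's proof: reduce to the $2$-skeleton via \Cref{lem:2skel}, establish that $(\Delta(G))^{(2)}$ is simply connected, and invoke the fact that a simply connected $2$-dimensional complex is homotopy equivalent to a wedge of $2$-spheres (the paper cites Bj\"orner (9.19) for this). The only difference is that you supply full proofs of the two facts the paper quotes as well known---collapsibility of $\Delta(G)$ along a perfect elimination ordering to get simple connectivity of the $2$-skeleton, and the Hurewicz/Whitehead argument for the wedge decomposition---both of which are carried out correctly.
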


\begin{proof}
    It is  well known that the $2$-skeleton of the clique complex of a chordal graph is simply connected. Also, we know that any $2$-dimensional simply connected simplicial complex is homotopy equivalent to a wedge of $2$-spheres (cf. \cite[(9.19)]{bjorner}). The result therefore follows from \Cref{lem:2skel}. 
\end{proof}
 Let $G$ be a graph. The {\it cone} over $G$ with apex vertex $w$, denoted as $C_wG$, is the graph with $V(C_wG) = V(G) \sqcup \{w\}$ and edge set $E(C_wG) = E(G) \cup \{(w, v) : v \in V(G)\}$. 
  
  %The following two results can be obtained from \Cref{lem:2skel}.
  
 \begin{lem}\label{lem:nm2ofconeofgraph}
  Let  $G$ be a graph with $m$ triangles. Then $\dl(C_wG)$ is homotopy equivalent to a wedge of $m$ spheres of dimension $2$.   
 \end{lem}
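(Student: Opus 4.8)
The plan is to run everything through the skeletal reduction of \Cref{lem:2skel} and then to recognise the resulting space as a mapping cone. First I would identify the clique complex of the cone graph. Since the apex $w$ is adjacent to every vertex of $G$, a subset of $V(C_wG)$ is a clique precisely when it is either a clique of $G$, or a clique of $G$ together with $w$. Hence $\Delta(C_wG)=\mathcal{C}_w(\Delta(G))$, the simplicial cone over $\Delta(G)$ with apex $w$. Writing $X=\Delta(G)$, \Cref{lem:2skel} then gives $\dl(C_wG)\simeq\big(\mathcal{C}_w(X)\big)^{(2)}$, so it suffices to compute this $2$-skeleton up to homotopy.

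Next I would make the $2$-skeleton explicit. A face of $\mathcal{C}_w(X)$ of dimension at most $2$ either avoids $w$, in which case it is a face of $X^{(2)}$, or it contains $w$, in which case it has the form $\{w\}\cup\tau$ with $\tau\in X^{(1)}$. Therefore
$$\big(\mathcal{C}_w(X)\big)^{(2)}=X^{(2)}\cup \mathcal{C}_w(X^{(1)}),\qquad X^{(2)}\cap \mathcal{C}_w(X^{(1)})=X^{(1)}.$$
Here $\mathcal{C}_w(X^{(1)})$ is the cone on $X^{(1)}$ glued to $X^{(2)}$ along $X^{(1)}$, so the whole space is exactly the mapping cone of the inclusion $\iota\colon X^{(1)}\hookrightarrow X^{(2)}$. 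Because $\iota$ is the inclusion of a subcomplex, it is a cofibration, and the standard fact about mapping cones of cofibrations yields $\big(\mathcal{C}_w(X)\big)^{(2)}\simeq X^{(2)}/X^{(1)}$. (Equivalently, $\mathcal{C}_w(X^{(1)})$ is contractible and forms a good pair with the ambient complex, so one may collapse it and reach the same quotient.)

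Finally I would count spheres. The complex $X^{(2)}$ is obtained from $X^{(1)}$ by attaching one $2$-cell for each $2$-simplex of $\Delta(G)$, that is, one for each triangle of $G$. Collapsing the $1$-skeleton $X^{(1)}$ to a point turns each such $2$-cell into a $2$-sphere, so $X^{(2)}/X^{(1)}\simeq\bigvee_{m}\bS^2$, where $m$ is the number of triangles in $G$. Chaining the equivalences gives $\dl(C_wG)\simeq\bigvee_m\bS^2$, as claimed.

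The work here is conceptual rather than computational: the one point that must be handled with care is the identification of $\big(\mathcal{C}_w(X)\big)^{(2)}$ as the mapping cone of $X^{(1)}\hookrightarrow X^{(2)}$, together with the invocation of the (co)fibration fact $C_\iota\simeq X^{(2)}/X^{(1)}$. Once that is in place, the CW structure of $X^{(2)}$ makes the sphere count immediate.
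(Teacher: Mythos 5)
Your proof is correct and follows essentially the same route as the paper: after the reduction via \Cref{lem:2skel}, your contractible cone $\mathcal{C}_w(X^{(1)})=\mathcal{C}_w(G)$ is exactly the subcomplex $(\Delta(C_wG))^{(2)}\setminus\mathcal{F}$ that the paper collapses, and your mapping-cone identification $C_\iota\simeq X^{(2)}/X^{(1)}$ is just a repackaging of the paper's quotient by this contractible subcomplex (your own parenthetical remark makes this explicit). Both arguments then count one $2$-cell per triangle of $G$ with collapsed boundary to conclude $\dl(C_wG)\simeq\bigvee_{m}\bS^{2}$.
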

 
 \begin{proof} 
 
 Let $\mathcal{F}=\{F\in (\Delta(C_wG))^{(2)}: |F|=3\text{ and } w \notin F\}$. Observe that $|\mathcal{F}| = m$. Clearly  $(\Delta(C_wG))^{(2)}\setminus \mathcal{F} \cong \mathcal{C}_{w}(G)$ (here $G$ is considered as a $1$-dimensional simplicial complex) and therefore $(\Delta(C_wG))^{(2)}\setminus \mathcal{F}$ is contractible. Hence, from \cite[Proposition 7.8]{dk}, we have that $(\Delta(C_wG))^{(2)}\simeq (\Delta(C_wG))^{(2)}/((\Delta(C_wG))^{(2)}\setminus \mathcal{F})$. Thus, from \Cref{lem:2skel}, $\dl(C_wG)\simeq (\Delta(C_wG))^{(2)}\simeq \bigvee\limits_{|\mathcal{F}|}\bS^2$.
 \end{proof}
 
%  Observe that corresponding to each triangle in $G$, there is a copy of $K_4$ in $C_w(G)$. Note that each copy of a $K_4$ gives a copy of  $\bS^2$  in the $2$-skeleton of $\Delta(C_w(G))$ that comes from the boundary of the tetrahedron corresponding to this $K_4$.  
 
%  Therefore  in the $2$-skeleton of $\Delta(C_w(G))$, it gives a copy of $\bS^2$  that comes from the boundary of the tetrahedron corresponding to this $K_4$. The result then follows from \Cref{lem:2skel}.  
 
 The {\it suspension}  $\Sigma G$ of a graph $G$ is the graph with vertex set 
  $V(\Sigma G) = V(G) \sqcup \{a, b\}$ and $E(\Sigma G) = E(G) \cup \{(a, v) : v \in V(G)\} \cup \{(b, v) : v \in V(G)\}$. 
  %A similar analysis  of the facets of $\dl(\Sigma G)$ gives the following result.
%   The following result shows commutativity of suspension with the functor $\dl$.

 \begin{lem}\label{lem:nm2ofsuspension}
    Let $G$ be a triangle free connected graph on at least two vertices. Then 
    $$
    \dl(\Sigma G) \simeq \Sigma(\dl(G)).
    $$
    \end{lem}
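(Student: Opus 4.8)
The plan is to identify both sides with explicit simplicial complexes and to recognize the right-hand side as a suspension. First I would apply \Cref{lem:2skel} to both graphs, reducing the claim to a statement about $2$-skeleta of clique complexes: $\dl(\Sigma G) \simeq (\Delta(\Sigma G))^{(2)}$ and $\dl(G) \simeq (\Delta(G))^{(2)}$. Since $G$ is triangle-free, $\Delta(G)$ has no simplices of dimension $\geq 2$, so $(\Delta(G))^{(2)} = \Delta(G)$, which coincides with $G$ regarded as a $1$-dimensional complex.

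The key step is to enumerate the cliques of $\Sigma G$. Write $V(\Sigma G) = V(G) \sqcup \{a,b\}$ with $a \not\sim b$. A clique of $\Sigma G$ either lies entirely in $V(G)$ --- in which case, by triangle-freeness, it is a vertex or an edge of $G$ --- or it contains exactly one of $a,b$ (it cannot contain both, since $a \not\sim b$). A clique containing $a$ has the form $\{a\} \cup \sigma$ with $\sigma \subseteq V(G)$; as $a$ is adjacent to every vertex of $G$, this set is complete precisely when $\sigma$ is a clique of $G$, i.e.\ a vertex or an edge. The same holds for $b$. Hence the faces of $\Delta(\Sigma G)$ are exactly the sets $\sigma$, $\{a\}\cup\sigma$, and $\{b\}\cup\sigma$ with $\sigma$ a face of $\Delta(G)$ (including $\sigma = \emptyset$), which is precisely the face set of the simplicial suspension of $\Delta(G)$. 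In particular $\Delta(\Sigma G)$ is $2$-dimensional, so $(\Delta(\Sigma G))^{(2)} = \Delta(\Sigma G)$ is the simplicial suspension of $\Delta(G)$.

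To finish, I would use that the geometric realization of the simplicial suspension of a complex $\K$ is the topological suspension $\Sigma|\K|$, together with the fact that $\Sigma(-)$ is a homotopy functor, so homotopy equivalent spaces have homotopy equivalent suspensions. Combining the identifications above with \Cref{lem:2skel} applied to $G$ yields
$$\dl(\Sigma G) \simeq (\Delta(\Sigma G))^{(2)} = \Sigma\big(\Delta(G)\big) = \Sigma\big((\Delta(G))^{(2)}\big) \simeq \Sigma(\dl(G)),$$
as desired.

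I do not expect a genuine obstacle here; the content is the clique enumeration of the second paragraph, and the two structural hypotheses are exactly what make it work. Triangle-freeness forbids $2$-dimensional faces inside $G$, so no tetrahedron $\{a,u,v,w\}$ (with $\{u,v,w\}$ a triangle) appears and the $2$-skeleton captures the whole clique complex; meanwhile $a \not\sim b$ forbids any face meeting both cone points, so the complex is genuinely a suspension (a join with the two non-adjacent points $\{a,b\}=S^0$) rather than a contractible join with the $1$-simplex on $\{a,b\}$. The assumption that $G$ is connected on at least two vertices only serves to guarantee that $\dl(G)$ is a nonempty complex so the suspension is non-degenerate; the underlying simplicial identity $\Delta(\Sigma G) = \Sigma(\Delta(G))$ holds for any triangle-free $G$. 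If one prefers the homotopy-theoretic machinery, an equivalent route is to write $(\Delta(\Sigma G))^{(2)}$ as the union of the two cones $\mathcal{C}_a(G)$ and $\mathcal{C}_b(G)$ glued along $G$; both cones are contractible, so by \Cref{rmk:homotopypushout} the (homotopy) pushout of $\mathcal{C}_a(G) \leftarrow G \rightarrow \mathcal{C}_b(G)$ is homotopy equivalent to $\Sigma(G) \simeq \Sigma(\dl(G))$.
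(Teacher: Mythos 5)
Your proof is correct and follows essentially the same route as the paper: reduce via \Cref{lem:2skel} to $2$-skeleta and observe that, by triangle-freeness and $a \not\sim b$, $(\Delta(\Sigma G))^{(2)} = \Delta(\Sigma G)$ is exactly the simplicial suspension of $(\Delta(G))^{(2)} = \Delta(G)$. The paper states this identification in one line; you supply the clique enumeration that justifies it, which is a welcome but not substantively different elaboration.
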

\begin{proof}
    We know that $\dl(\Sigma G)\simeq (\Delta(\Sigma G))^{(2)}$. Since $G$ is triangle free, $(\Delta(\Sigma G))^{(2)}= \Sigma ((\Delta(G))^{(2)})\simeq \Sigma(\dl(G)).$
\end{proof}

\subsection{Complete multipartite graphs}\label{sec:chordal}

Let $m_1, m_2, \ldots, m_r$ be positive integers and let $A_i$ be a set of cardinality $m_i$ for $1 \leq i \leq r$. A complete multipartite graph
$K_{m_1, \ldots, m_r}$ is a graph on vertex set $A_1 \sqcup \ldots \sqcup A_r$ and two vertices are adjacent if and only if they lie in different $A_i$'s. Using functoriality of $\dl$, we study the homotopy type of $\dl$ of complete multipartite graphs. The following result  can be obtained from \Cref{lem:2skel}.

\begin{cor}\label{lem:disconn graph NM2}
Let $H$ be a connected induced subgraph of connected graphs $G_1$ and $G_2$. Then,
\begin{equation*}
    \dl(G_1\bigcup\limits_{H} G_2) \simeq 
    \begin{cases}
    \dl(G_1) \bigsqcup \dl(G_2) & \rm{if~} |V(H)|=0, \\
    \dl(G_1) \bigvee \dl(G_2) & \rm{if~} |V(H)|=1, \\
    \dl(G_1) \bigcup\limits_{\dl(H)} \dl(G_2) & \rm{otherwise}.
    \end{cases}
\end{equation*}
In particular, for any $1\le s \le m_r-1$, $\dl(K_{m_1, m_2,\dots, m_r})$ is homotopy equivalent to the pushout of $$\dl(K_{m_1, m_2,\dots, m_{r-1}, s}) \hookleftarrow \dl(K_{m_1, m_2,\dots, m_{r-1}}) \hookrightarrow \dl(K_{m_1, m_2,\dots, m_{r-1},m_r-s}).$$ 
\end{cor}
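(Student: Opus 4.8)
The plan is to prove the three-case decomposition first and then derive the complete multipartite consequence by a single application of the first part.

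\medskip

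\textbf{Proving the decomposition.} Write $G = G_1 \bigcup_H G_2$, meaning $G_1 \cap G_2 = H$ as induced subgraphs, with $V(G) = V(G_1) \cup V(G_2)$ and $V(G_1) \cap V(G_2) = V(H)$, and similarly for edges. By \Cref{lem:2skel} it suffices to understand $(\Delta(G))^{(2)}$, so first I would record that for a union of graphs glued along an induced subgraph, $\Delta(G) = \Delta(G_1) \cup \Delta(G_2)$ with $\Delta(G_1) \cap \Delta(G_2) = \Delta(H)$; the key point is that a clique of $G$ meeting both $V(G_1) \setminus V(H)$ and $V(G_2) \setminus V(H)$ cannot exist, because a vertex in $V(G_1)\setminus V(H)$ and a vertex in $V(G_2)\setminus V(H)$ are non-adjacent (any edge between them would already live in $H$). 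Taking $2$-skeletons is compatible with this, giving $(\Delta(G))^{(2)} = (\Delta(G_1))^{(2)} \cup (\Delta(G_2))^{(2)}$ with intersection $(\Delta(H))^{(2)}$. When $|V(H)| = 0$ the two pieces are disjoint, yielding the disjoint union. When $|V(H)| = 1$ the intersection is a single point, so the union is a wedge. In the general case, since the intersection along the subcomplex $(\Delta(H))^{(2)} \simeq \dl(H)$ is a cofibration (it is a subcomplex inclusion), \Cref{rmk:homotopypushout} lets me replace the pushout by the homotopy pushout, so $\dl(G)$ is the pushout of $\dl(G_1) \hookleftarrow \dl(H) \hookrightarrow \dl(G_2)$ as claimed.

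\medskip

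\textbf{The multipartite specialization.} For the ``in particular'' statement, fix a part $A_r$ of size $m_r$ and split it as $A_r = A_r' \sqcup A_r''$ with $|A_r'| = s$ and $|A_r''| = m_r - s$ for some $1 \le s \le m_r - 1$. I would then let $G_1 = K_{m_1,\dots,m_{r-1}}[A_1 \sqcup \cdots \sqcup A_{r-1} \sqcup A_r']$ and $G_2$ be the analogous graph using $A_r''$; each is a copy of $K_{m_1,\dots,m_{r-1},s}$ and $K_{m_1,\dots,m_{r-1},m_r-s}$ respectively. The crucial observation is that the two pieces of $A_r$ are never adjacent to each other (they lie in the same part), so $G_1 \cup G_2 = K_{m_1,\dots,m_r}$ and their intersection is exactly the induced subgraph on $A_1 \sqcup \cdots \sqcup A_{r-1}$, namely $H = K_{m_1,\dots,m_{r-1}}$. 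Since $r \ge 2$ and each $m_i \ge 1$, $H$ has at least $m_1 + m_2 \ge 2$ vertices and is connected, so the third (``otherwise'') case of the decomposition applies and delivers the asserted pushout.

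\medskip

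\textbf{Anticipated obstacle.} The main thing requiring care is the identity $\Delta(G) = \Delta(G_1)\cup\Delta(G_2)$ and, equivalently, that no clique straddles the two sides; this is where the induced-subgraph hypothesis is genuinely used, and I would state it as a short lemma-style observation rather than leave it implicit. A secondary subtlety is the passage from pushout to homotopy pushout: one must confirm the hypotheses of \Cref{rmk:homotopypushout}, i.e. that both maps are honest subcomplex inclusions of simplicial complexes, which holds here by construction. The connectedness of $H$ is needed only to ensure $\dl(H)$ is well-behaved as a gluing space and to match the statement's hypothesis; no deeper topological input is required beyond \Cref{lem:2skel}.
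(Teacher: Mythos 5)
Your proposal is essentially the paper's own (implicit) derivation: the paper gives no argument beyond the remark that the corollary ``can be obtained from \Cref{lem:2skel}'', and your route is clearly the intended one --- the literal identity $(\Delta(G))^{(2)} = (\Delta(G_1))^{(2)} \cup (\Delta(G_2))^{(2)}$ with intersection $(\Delta(H))^{(2)}$, justified by the observation that no clique can straddle the two sides, followed by the case split on $|V(H)|$ and \Cref{rmk:homotopypushout}. You are also right that working at the level of $2$-skeletons is genuinely necessary rather than cosmetic: $\dl(G)$ is \emph{not} literally $\dl(G_1)\cup_{\dl(H)}\dl(G_2)$, since two edges $(u,h)\in E(G_1)$ and $(h,w)\in E(G_2)$ with $h\in V(H)$ span a simplex of $\dl(G)$ lying in neither piece. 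The one point you gloss (at the same level of informality as the paper) is that transferring the skeletal pushout to the pushout of the $\dl$'s needs the equivalences of \Cref{lem:2skel} to be compatible, up to homotopy, with the inclusions of the diagram, before invoking the invariance of homotopy pushouts under equivalences of diagrams.

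There is, however, one concrete slip in your multipartite specialization: the claim that $H = K_{m_1,\dots,m_{r-1}}$ ``has at least $m_1+m_2\ge 2$ vertices and is connected'' since $r\ge 2$ silently assumes $r\ge 3$. For $r=2$ the graph $K_{m_1}$ in this multipartite notation is the \emph{edgeless} graph on $m_1$ vertices, which is disconnected for $m_1\ge 2$, so the ``otherwise'' case does not apply; and the asserted conclusion genuinely fails there: for $K_{2,2}$ with $s=1$, the complex $\dl(K_{m_1})$ is empty, so the pushout is $\dl(K_{2,1})\sqcup\dl(K_{2,1})$, two contractible components, whereas $\dl(K_{2,2})=\dl(C_4)\simeq \mathbb{S}^1$. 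This imprecision is inherited from the statement itself, which tacitly assumes $K_{m_1,\dots,m_{r-1}}$ connected, i.e.\ $r\ge 3$ (or $m_1=1$); that hypothesis holds in every application made in the paper (\Cref{thm:nm2ofpartitegraphs} and \Cref{thm:multipartite}), so your argument is correct once you replace ``$r\ge2$'' by ``$r\ge3$'' and note the restriction.
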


\begin{prop} \label{thm:nm2ofpartitegraphs}
	For $m,n,r \in \mathbb{N}$, $\dl(K_{m,n}) \simeq \bigvee\limits_t \mathbb{S}^1$ and $\dl(K_{m,n,r}) \simeq \bigvee\limits_{t(r-1)} \mathbb{S}^2$, where $t = mn - (m+n-1)$.
\end{prop}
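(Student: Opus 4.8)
The plan is to handle the two assertions separately: the bipartite case by a direct count via the triangle-free result, and the tripartite case by induction on $r$ using the pushout decomposition of \Cref{lem:disconn graph NM2}.

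For the bipartite case, I would observe that $K_{m,n}$ is triangle free and connected, so \Cref{prop:nm2oftrianglefree} applies and gives $\dl(K_{m,n}) \simeq \bigvee_{u(K_{m,n})} \bS^1$. It then remains to identify $u(K_{m,n}) = e(K_{m,n}) - e(T)$ for a spanning tree $T$. Since $K_{m,n}$ has $mn$ edges and $m+n$ vertices, any spanning tree has $m+n-1$ edges, whence $u(K_{m,n}) = mn - (m+n-1) = t$. This settles the first claim.

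For the tripartite case I would argue by induction on $r$. The base case $r=1$ rests on the fact that $K_{m,n,1}$ is exactly the cone $C_w(K_{m,n})$ over the triangle-free graph $K_{m,n}$; by \Cref{lem:nm2ofconeofgraph} the complex $\dl(K_{m,n,1})$ is a wedge of as many $2$-spheres as $K_{m,n}$ has triangles, namely none, so $\dl(K_{m,n,1})$ is contractible, matching $t(1-1)=0$. For the inductive step, fix $r \geq 2$ and apply the pushout decomposition of \Cref{lem:disconn graph NM2} with $s=1$, realizing $\dl(K_{m,n,r})$ as the pushout of
$$\dl(K_{m,n,1}) \hookleftarrow \dl(K_{m,n}) \hookrightarrow \dl(K_{m,n,r-1}).$$

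The crux of the argument, and the step I expect to demand the most care, is to check that both legs of this span are null-homotopic, so that \Cref{rmk:pushout1} applies. The left-hand inclusion lands in the contractible space $\dl(K_{m,n,1})$ and is therefore null-homotopic. For the right-hand inclusion, I would use that for any single vertex $w$ of the third part of $K_{m,n,r-1}$, the subgraph induced on the first two parts together with $w$ is precisely $K_{m,n,1}$; since passing to line graphs carries subgraph inclusions to subcomplex inclusions, the map $\dl(K_{m,n}) \hookrightarrow \dl(K_{m,n,r-1})$ factors through the contractible complex $\dl(K_{m,n,1})$ and is thus null-homotopic as well. With both legs null-homotopic, \Cref{rmk:pushout1} gives
$$\dl(K_{m,n,r}) \simeq \dl(K_{m,n,1}) \vee \dl(K_{m,n,r-1}) \vee \Sigma\big(\dl(K_{m,n})\big).$$
Here the first summand is contractible and drops out, the middle summand is $\bigvee_{t(r-2)} \bS^2$ by the inductive hypothesis, and the last summand is $\Sigma\big(\bigvee_t \bS^1\big) \simeq \bigvee_t \bS^2$. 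Adding the counts gives $t(r-2)+t = t(r-1)$, which completes the induction. As a consistency check, the case $r=2$ reproduces $\dl(K_{m,n,2}) = \dl(\Sigma K_{m,n}) \simeq \Sigma(\dl(K_{m,n})) \simeq \bigvee_t \bS^2$, in agreement with \Cref{lem:nm2ofsuspension}.
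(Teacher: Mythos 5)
Your proof is correct and takes essentially the same route as the paper: the bipartite case via \Cref{prop:nm2oftrianglefree}, contractibility of $\dl(K_{m,n,1})$ as a cone via \Cref{lem:nm2ofconeofgraph}, and induction on $r$ through the pushout of \Cref{lem:disconn graph NM2} combined with \Cref{rmk:pushout1}. If anything, you are more careful than the paper at one point it passes over with ``repeating the similar arguments'': you explicitly verify that the leg $\dl(K_{m,n}) \hookrightarrow \dl(K_{m,n,r-1})$ is null-homotopic by factoring it through the contractible subcomplex $\dl(K_{m,n,1})$.
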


\begin{proof}
    For the graph $K_{m,n}$, recall $u(K_{m,n}) = e(K_{m,n}) - e(T_{m,n}) = mn - (m+n-1)$, where $T_{m,n}$ denotes a spanning tree of $K_{m,n}$.
    Since any bipartite graph is triangle-free,  \Cref{prop:nm2oftrianglefree} implies that $\dl(K_{m,n})$ is homotopy equivalent to $\bigvee\limits_{u(K_{m,n})} \mathbb{S}^1$.
    
    Further, $K_{m,n,1}$ is a cone over the triangle-free graph $K_{m,n}$, and hence $\dl(K_{m,n,1})$ is contractible by \Cref{lem:nm2ofconeofgraph}. 
    Now using \Cref{rmk:pushout1} and \Cref{lem:disconn graph NM2}, $\dl(K_{m,n,2}) \simeq \{\text{pt}\} \bigvee \{\text{pt}\} \bigvee \big{(}\bigvee\limits_{u(K_{m,n})} \mathbb{S}^2 \big{)}$.
    Inductively, we construct $K_{m,n,r}$ as a pushout of $K_{m,n,r-1} \hookleftarrow K_{m,n} \hookrightarrow K_{m,n,1}$. 
    Then repeating the similar arguments, we get that $$\dl(K_{m,n, r}) \simeq \bigvee\limits_{u(K_{m,n})(r-1)} \mathbb{S}^2.\vspace*{-0.7cm}$$
\end{proof}

\begin{note}
    The homotopy type of $\dl(K_{m,n})$ has also been computed by Linusson et al. in \cite[Theorem 1.4]{LSW} using discrete Morse theory. 
\end{note}

\begin{thm} \label{thm:multipartite}
    For a complete $r$-partite graph $K_{m_1,\dots,m_r}$, $\dl(K_{m_1,\dots,m_r})$ is homotopy equivalent to a wedge of $\mathbb{S}^2$'s for $r > 2$.
\end{thm}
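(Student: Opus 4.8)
The plan is to prove Theorem \ref{thm:multipartite} by induction on $r$, using the pushout structure established in \Cref{lem:disconn graph NM2} together with the base cases already computed in \Cref{thm:nm2ofpartitegraphs}. The key idea is that \Cref{thm:nm2ofpartitegraphs} gives us the full story for $r=2$ and $r=3$: in particular $\dl(K_{m,n,1})$ is contractible (it is the $\dl$ of a cone over a triangle-free graph), and $\dl(K_{m,n,r})$ is a wedge of $2$-spheres. For the inductive step with $r > 3$, I would realize $K_{m_1,\dots,m_r}$ as the union $K_{m_1,\dots,m_{r-1},s} \cup_H K_{m_1,\dots,m_{r-1},m_r-s}$ along $H = K_{m_1,\dots,m_{r-1}}$, exactly as in the ``in particular'' clause of \Cref{lem:disconn graph NM2}, and compute the homotopy type of the resulting pushout.

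First I would set up the induction hypothesis: for all complete multipartite graphs with fewer than $r$ parts (and at least $3$ parts), $\dl$ is a wedge of $2$-spheres, while for $2$ parts it is a wedge of circles (from \Cref{thm:nm2ofpartitegraphs}). For the pushout
\[
\dl(K_{m_1,\dots,m_{r-1},s}) \hookleftarrow \dl(K_{m_1,\dots,m_{r-1}}) \hookrightarrow \dl(K_{m_1,\dots,m_{r-1},m_r-s}),
\]
I would choose $s=1$ so that one of the two corner terms, $\dl(K_{m_1,\dots,m_{r-1},1})$, is the $\dl$ of a cone and hence contractible by \Cref{lem:nm2ofconeofgraph}. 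This collapses the pushout considerably. With one corner contractible, the homotopy pushout of $Y \leftarrow X \rightarrow Z$ with $Z \simeq \{\text{pt}\}$ is homotopy equivalent to the cofiber of the map $X \to Y$, i.e. $Y \cup_{X} CX$, which is $Y$ with the image of $X$ coned off. Here $X = \dl(K_{m_1,\dots,m_{r-1}})$ and $Y = \dl(K_{m_1,\dots,m_{r-1},1})$ is also contractible, so the pushout reduces to $\Sigma X$, the suspension of $\dl(K_{m_1,\dots,m_{r-1}})$, by \Cref{rmk:pushout1} (the inclusions into contractible spaces are null-homotopic).

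The heart of the argument is therefore to show that suspending the $(r-1)$-part answer yields the $r$-part answer of the correct dimension. By the induction hypothesis $\dl(K_{m_1,\dots,m_{r-1}})$ is a wedge of $2$-spheres when $r-1 \geq 3$, so its suspension is a wedge of $3$-spheres --- which is the \emph{wrong} dimension. This signals that the naive single pushout does not immediately preserve the ``wedge of $2$-spheres'' conclusion, and the main obstacle is precisely reconciling dimensions. The resolution is that one must not suspend: instead, for $r > 2$ one builds $K_{m_1,\dots,m_r}$ iteratively from $K_{m_1,\dots,m_{r-1}}$ by adding the last part one vertex at a time, at each stage forming a pushout against the \emph{contractible} cone $\dl(K_{m_1,\dots,m_{r-1},1})$ over a common contractible base whenever $r-1 \geq 3$ (since then $K_{m_1,\dots,m_{r-1}}$ already has $\dl$ a wedge of $2$-spheres, and the relevant gluing maps land in contractible pieces). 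I expect the careful bookkeeping to show that each added vertex contributes a controlled wedge of $2$-spheres via \Cref{rmk:pushout1}, with the suspension terms $\Sigma(\dl(H))$ entering only through the already-$2$-dimensional intersection data. Thus the main obstacle is organizing the induction so that every suspension that appears is a suspension of a $1$-dimensional (circle) piece --- giving $2$-spheres --- rather than of a $2$-dimensional piece; this is exactly why the base case $r=3$ in \Cref{thm:nm2ofpartitegraphs}, where $\dl(K_{m,n})$ is a wedge of \emph{circles}, is the crucial seed, and why the theorem is stated only for $r>2$.
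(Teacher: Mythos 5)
There is a genuine gap, and it occurs exactly where you sensed trouble. Your reduction hinges on the claim that $\dl(K_{m_1,\dots,m_{r-1},1})$ is contractible by \Cref{lem:nm2ofconeofgraph}, but that lemma gives contractibility only when the base of the cone is triangle-free: for a base with $m$ triangles it yields a wedge of $m$ two-spheres. Since $K_{m_1,\dots,m_{r-1}}$ contains triangles as soon as $r-1\geq 3$, for $r>3$ \emph{neither} corner of your pushout is contractible, and the intersection piece $\dl(K_{m_1,\dots,m_{r-1}})$ is itself a wedge of $2$-spheres by your induction hypothesis --- so the cofiber/suspension simplification via \Cref{rmk:pushout1} that worked for $r=3$ is no longer available in the form you want. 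Your proposed fix, reorganizing the induction so that ``every suspension that appears is a suspension of a circle piece,'' cannot be carried out: no matter how you add the vertices of the $r$-th part one at a time, the common subgraph along which you glue is always $K_{m_1,\dots,m_{r-1}}$, whose $\dl$ is $2$-spherical (not a wedge of circles) once $r-1\geq 3$, and your assertion that the gluing happens ``over a common contractible base'' is false in precisely these cases. The suspension terms $\Sigma\bigl(\bigvee \bS^2\bigr)\simeq \bigvee \bS^3$ therefore do appear in the homotopy pushout, and purely pushout-theoretic bookkeeping cannot make them go away.

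The missing idea is \Cref{thm:leray}. The paper runs the same $s=1$ pushout you propose: the inclusions of $\dl(K_{m_1,m_2,m_3})$ into the two corners cone off its $2$-spheres (each corner is, up to homotopy, $\bigl(\bigvee \bS^2\bigr)\vee\bigl(\bigvee \mathbb{B}^3\bigr)$), so the pushout is a wedge of spheres of dimensions $2$ \emph{and} $3$. The $3$-spheres are then eliminated not by the pushout structure but by an external input: $\dl(G)=NM_2(G)$ is $3$-Leray, hence $\tilde{H}_3(\dl(K_{m_1,m_2,m_3,m_4}))=0$, which forces the $3$-sphere summands to be absent, leaving a wedge of $\bS^2$'s; one then iterates for larger $r$. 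So the top-dimensional control in the inductive step comes from Lerayness, and without invoking \Cref{thm:leray} (or some equivalent vanishing of $\tilde{H}_3$) your induction cannot close for $r>3$.
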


\begin{proof}
    From \Cref{thm:nm2ofpartitegraphs}, $\dl(K_{m_1,m_2,m_3})$ is homotopy equivalent  to a wedge of 2-dimensional spheres. So let $r>3$. Since $K_{m_1,m_2,m_3,1}$ is a cone over $K_{m_1,m_2,m_3}$, \Cref{lem:nm2ofconeofgraph} gives that $\dl(K_{m_1,m_2,m_3,1})$ is homotopy equivalent to a wedge of $\mathbb{S}^2$'s. 
   % Therefore $\dl(K_{m_1,m_2,m_3,2})$ is also a wedge of $\mathbb{S}^2$. 
   If $m_4>1$, we inductively construct $K_{m_1,m_2,m_3,m_4}$ as a pushout of 
    $$K_{m_1,m_2,m_3,m_4-1} \hookleftarrow K_{m_1,m_2,m_3} \hookrightarrow K_{m_1,m_2,m_3,1}.$$ 
    Note that after applying $\dl$ to this diagram and using \Cref{thm:nm2ofpartitegraphs}, we get the following pushout diagram       

\begin{figure}[H]
\centering
      \begin{tikzpicture}
\node (a) at (0,0) {$\bigvee \bS^2$};
\node (c) at (0,-2) {$\big(\bigvee \bS^2\big) \vee \big{(} \bigvee \mathbb{B}^3 \big{)}$};
\node (b) at (3,0) {$\big( \bigvee \bS^2 \big) \vee \big{(} \bigvee \mathbb{B}^3 \big{)}$};
\node (d) at (3,-2) {$X$}; \node at (5.1,-2) {$\simeq \dl(K_{m_1,m_2,m_3,m_4})$};
%\draw[-] (2.4,-1.7) -- (2.7,-1.7) -- (2.7,-1.4);
\draw[right hook->] (a)--(b) node[midway,above right] {};
\draw[right hook->] (a)--(c) node[midway,above right] {};
\draw[->] (b) edge (d) (c) edge (d);
\end{tikzpicture}
\caption{$\dl(K_{m_1,m_2,m_3,m_4})$ as a pushout}\label{diag:pushout}
\end{figure}

Here $\mathbb{B}^3$ denotes a closed ball of dimension $3$. 
From  \Cref{diag:pushout}, it is easy to see that $\dl (K_{m_1,m_2,m_3,m_4})$ is homotopy equivalent to a wedge of spheres of dimension $2$ and $3$. 
However, \Cref{thm:leray} implies  that $\tilde{H}_3(\dl(K_{m_1,m_2,m_3,m_4}))=0$, and hence $\dl(K_{m_1,m_2,m_3,m_4}) \simeq \bigvee \mathbb{S}^2$. 
    We now repeat these arguments for $K_{m_1,\dots,m_{r}}$ to get the desired result.
\end{proof}

\section{Wheel-free graphs and 4-regular circulant graphs}\label{sec:wheel}
	
 For $n \geq 3$, a  wheel graph on $n+1$ vertices, denoted by $W_n$, is a graph isomorphic to cone over a cycle graph $C_n$. For a wheel graph, we call apex vertex of the cone as the center of the wheel. 
	
	\begin{thm}\label{thm:wheelfree}
	Let $G$ be a connected wheel-free graph. Then $\dl (G)$ is homotopy equivalent to a wedge of circles. 
\end{thm}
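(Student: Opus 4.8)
The plan is to invoke \Cref{lem:2skel} to replace $\dl(G)$ by the $2$-skeleton $(\Delta(G))^{(2)}$, and then to show that for a wheel-free graph this $2$-complex \emph{collapses onto a $1$-dimensional complex}; since a connected finite graph is homotopy equivalent to a wedge of circles, this gives the statement. The crucial structural observation is that wheel-freeness is precisely a local condition on neighbourhoods: I claim that $G$ is wheel-free if and only if, for every vertex $w$, the induced subgraph $G[N_G(w)]$ is a forest. Indeed, if $G[N_G(w)]$ contained a cycle it would contain an induced cycle $C_n$ with $n\ge 3$; as $w$ is adjacent to all of $N_G(w)$ and this cycle is chordless, the induced subgraph on $\{w\}\cup V(C_n)$ would be the wheel $W_n$, contradicting wheel-freeness (for $n=3$ this forbidden configuration is just an induced $K_4=W_3$). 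Conversely an induced $W_n$ centred at $w$ forces an induced $C_n$ in $G[N_G(w)]$. In particular $G[N_G(w)]$ is triangle-free, so the link of $w$ in $(\Delta(G))^{(2)}$ is exactly the forest $G[N_G(w)]$, viewed as a $1$-complex.

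I would then argue by induction on $|V(G)|$, proving the slightly stronger statement that $(\Delta(G))^{(2)}$ collapses to a $1$-dimensional complex for \emph{every} (not necessarily connected) wheel-free graph. For the inductive step, fix a vertex $w$ and set $F:=G[N_G(w)]$, a forest by the observation above. The $2$-simplices of $(\Delta(G))^{(2)}$ through $w$ are precisely $\{w,u,v\}$ for edges $\{u,v\}$ of $F$, and an edge $\{w,v\}$ lies in exactly the triangles $\{w,v,x\}$ with $x\in N_G(w)\cap N_G(v)=N_F(v)$. Hence, whenever $v$ is a leaf of $F$ with unique neighbour $u$, the edge $\{w,v\}$ is a free face of the unique maximal simplex $\{w,v,u\}$. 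Peeling leaf edges of $F$ one at a time then gives a sequence of elementary collapses: processing the edges of each tree component in a leaf order removes every $2$-simplex through $w$ together with one $w$-edge per peeled leaf, leaving exactly one surviving edge $\{w,r_j\}$ per component of $F$ (say $c$ components, with survivors $\{w,r_1\},\dots,\{w,r_c\}$) and no $2$-simplex containing $w$. Writing $K'$ for the resulting complex, we obtain $(\Delta(G))^{(2)}\searrow K'$, where the only simplices of $K'$ meeting $w$ are those $c$ edges.

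The point of this reduction is that the $2$-simplices of $K'$ are exactly the triangles of $G-w$, so they all lie in the subcomplex $(\Delta(G-w))^{(2)}\subseteq K'$. Since $G-w$ is again wheel-free, the induction hypothesis yields a collapse $(\Delta(G-w))^{(2)}\searrow Y$ onto a $1$-dimensional complex $Y$. Each such elementary collapse removes a free edge together with its unique triangle, and such an edge lies entirely in $G-w$, hence cannot be contained in any of the extra simplices $\{w\},\{w,r_1\},\dots,\{w,r_c\}$; therefore every collapse stays valid in $K'$ and lifts to a collapse of $K'$ onto $Y$ together with the vertex $w$ and the edges $\{w,r_1\},\dots,\{w,r_c\}$, which is again $1$-dimensional. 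Composing the two stages gives $(\Delta(G))^{(2)}\searrow(\text{a }1\text{-complex})$. Finally, for connected $G$ this $1$-complex is connected, and a connected finite graph is homotopy equivalent to a wedge of circles, so $\dl(G)\simeq(\Delta(G))^{(2)}\simeq\bigvee\bS^1$.

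I expect the main obstacle to be the bookkeeping in the collapsing step: one must fix the leaf-peeling order on $F$ so that at each stage $\{w,v\}$ is genuinely free in the \emph{partially collapsed} complex (the identity $N_G(w)\cap N_G(v)=N_F(v)$ guarantees this, but it has to be checked against the simplices already removed), and one must confirm that the inductive collapses inside $(\Delta(G-w))^{(2)}$ do not interact with the newly attached edges at $w$. A more homotopy-theoretic alternative would realise $(\Delta(G))^{(2)}$ as the homotopy pushout of $(\Delta(G-w))^{(2)}\hookleftarrow F\hookrightarrow w\ast F$, observe that the star $w\ast F$ is a contractible cone over a forest, and identify the pushout with the mapping cone $(\Delta(G-w))^{(2)}/F$; collapsing the forest $F$ then merely identifies finitely many points and introduces additional circles. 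I would present the collapsing argument as primary, since it is constructive and remains within the $1$-dimensional world, and mention the pushout description only as corroboration.
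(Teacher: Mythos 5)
Your proposal is correct, and it proves the theorem by a genuinely different (and somewhat leaner) route than the paper, though both ultimately rest on the same structural fact --- that wheel-freeness forces every neighbourhood $G[N_G(w)]$ to be a forest --- and both reach dimension $1$ by leaf-peeling elementary collapses. The paper never passes to the $2$-skeleton for this theorem: it collapses $\dl(G)$ itself, facet by facet. There the star of a vertex $x$ is a single high-dimensional simplex $\{(x,v):v\in N_G(x)\}$, so the paper must partition $N_G(x)$ into isolated vertices $\A$ and tree components $\B_j$ of $G[N_G(x)]$, invoke the bespoke collapsing machinery of \Cref{lemma:chordalcollapsing2} and \Cref{lemma:generalcollapsing} to break up the star, run an inner induction peeling leaves of each tree $G[B_j]$, and then dispose of the triangle facets in a second step. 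You instead apply \Cref{lem:2skel} first, so that in $(\Delta(G))^{(2)}$ every $2$-simplex is automatically a facet; the freeness check collapses to the identity $N_G(w)\cap N_G(v)=N_F(v)$, and a single global induction on $|V(G)|$ (deleting one vertex $w$ after peeling the forest $F=G[N_G(w)]$) replaces the paper's facet-by-facet analysis, with no need for \Cref{lemma:chordalcollapsing2} at all. What your route buys is economy and a cleaner inductive structure; what the paper's route buys is an explicit collapse of $\dl(G)$ itself rather than of a homotopy-equivalent model. Your two flagged bookkeeping worries are indeed non-issues, for the reasons you give: after each peel the triangles through $w$ biject with the edges of the current forest, so leaf edges remain free in the partially collapsed complex; and since (by your inductive construction) every elementary collapse is an (edge, triangle) pair whose edge has both endpoints in $G-w$, none of these pairs can be blocked by the retained simplices $\{w\},\{w,r_1\},\dots,\{w,r_c\}$ --- it is worth stating the inductive hypothesis as ``collapses onto a $1$-complex through collapses of (edge, triangle) type'' to make this lifting automatic. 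One small simplification: since the paper's notion of $H$-freeness forbids subgraphs (not merely induced subgraphs), any cycle in $G[N_G(w)]$, chordal or not, already yields a wheel subgraph with center $w$, so you do not need to pass to a shortest (induced) cycle in your local characterization.
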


\begin{proof}
	In view of \Cref{lem:2skel}, we can assume that $G$ does not have any leaf. Observe that any maximal simplex $\sigma$ in $\dl (G)$ is one of the following two types
	\begin{enumerate}
		\item $\sigma=\{e_1,\dots,e_t\}$ such that $\bigcap\limits_{i\in [t]} e_i =\{x\}$, {\itshape i.e.}, every edge shares a common vertex.
		\item $\sigma=\{e_1,e_2,e_3\}$ and $e_1,e_2,e_3$ forms a triangle in $G$.
	\end{enumerate}
	We now show that each facet of $\dl (G)$ can be collapsed to a $1$-dimensional subcomplex. First we collapse facets of type $1$.\\   
	
	\noindent{\bf Case 1:} Let $\sigma = \{e_1, \dots, e_t\}$ be a facet and $\bigcap\limits_{i\in [t]} e_i =\{x\}$. Without loss of generality, we can assume that $t\geq 3$. We partition the set  $N_G(x) = A \sqcup B$, where
	
	\begin{itemize}
		\item $A= \bigcup\limits_{i \in [r_1]} \{a_i\}$, where each $a_i$ is an isolated vertex in the induced subgraph $G[N_G(x)]$.
		\item $B= \bigsqcup\limits_{j \in [r_2]} B_j$, where each $B_j \subseteq N_G(x)$ such that $G[B_j]$ is a connected component of $G[N_G(x)]$ and has more than $1$ vertex. %(cf. \Cref{fig:wheelfreegraph})
	\end{itemize}
	%Note that the induced subgraph $G[N[x]]$ is a broken wheel and $A$ is collection of those outer vertices that have degree $1$ and $B_j$ collects remaining edges. 
	Since $G$ is wheel-free graph, it is easy to see that $G[B_j]$ is a tree for each $j \in [r_2]$. Define,
	%Observe that $|C_j|=\big{\lceil}{\frac{|V(B_j)|-1}{2}}\big{\rceil}$. 
	\begin{equation}
	\begin{split}
	\A & =\{(x,a_i): i \in [r_1]\} \mathrm{~and} \\
	\B & = \bigsqcup\limits_{j \in [r_2]} \{(x,a): a \in B_j\},
	\end{split}
	\end{equation}

	(cf. \Cref{fig:wheelfreegraph}).	For each $j \in [r_2]$, let $\B_j = \{(x, a) : a \in B_j\}$.  It is easy to see that each $\{e, e'\}$ is a free face of $\sigma $ whenever 
	\begin{itemize}
	    \item[(i)] $e \in \B_i$, $e'\in \B_j$ and $i\neq j$, or
	    \item[(ii)] $e \in \A$ and $e'\in \B$, or
	    \item[(iii)] $e, e' \in \A$.
	\end{itemize}

	For each $j \in [r_2]$, let  $\B_j = \{e_1^j, e_2^j, \ldots, e_{l_j}^j\}$ and $\A = \{e_1, \ldots, e_{r_1}\}$. By \Cref{lemma:generalcollapsing}$(1)$, $\sigma \searrow   \langle\B_1, \ldots, \B_{r_2},\A , \{e_{l_1}^1, e_{r_1}\},\dots, \{e_{l_{r_2}}^{r_2}, e_{r_1}\} \rangle$. 
	%\vspace{-0.3cm}

 \begin{figure}[H]
    \begin{subfigure}[]{0.3\textwidth}
    				\centering
        \tikzstyle{black1}=[circle, draw, fill=black!100, inner sep=0pt, minimum width=4pt]
        \tikzstyle{ver}=[]
        \tikzstyle{extra}=[circle, draw, fill=black!00, inner sep=0pt, minimum width=2pt]
        \tikzstyle{edge} = [draw,-]
        \tikzstyle{light} = [draw, gray,-]
        \centering
        \resizebox{0.85\textwidth}{!}{%
        \begin{tikzpicture}[scale=0.6]
            \foreach \x/\y in 				{0/v_{1},40/v_{2},80/v_{3},120/v_{4},160/v_{5},200/v_{6},240/v_{7},280/v_{8}}{
            	\node[black1] (\y) at (\x:4){};
            }
            
            \foreach \x/\y in 
                { 60/w_{1}}{
            	\node[black1] (\y) at (\x:6){};
            }
            
            \node[black1] (v) at (0,0){};
            
            \foreach \x/\y in {v_{1}/v_{2},v_{2}/v_{3},v_{3}/v_{4},v_{5}/v_{6},v_{2}/w_{1}}{
            	\path[edge] (\x) -- (\y);}
            
            \foreach \x/\y in 				{v/v_{1},v/v_{2},v/v_{3},v/v_{4},v/v_{5},v/v_{6},/v_{7},v/v_{8},v/w_{1}}{
            	\path[edge] (v) -- (\y);}
            
            \node[right] at (0.1,-4.45) {$a_2$};
            \node[right] at (-2.65,-4.05) {$a_1$};
            \node[right] at (0.1,-0.45) {$x$};
            \node[below] at (4.7,0.7) {$a_1^1$};
            \node[below] at (3.8,3.5) {$a_2^1$};
            \node[right] at (0,4.6) {$a_3^1$};
            \node[above] at (2.9,5.2) {$a_4^1$};
            \node[left] at (-1.7,4.09) {$a_5^1$};
            \node[left] at (-3.7,1.4) {$a_2^1$};
            \node[left] at (-3.7,-1.3) {$a_2^2$};
    
        \end{tikzpicture}
        }%
        \caption{$G[\{x\} \cup N_G(x)]$}
    \end{subfigure}
    \begin{subfigure}[]{0.3\textwidth}
			\centering
			\tikzstyle{black1}=[circle, draw, fill=black!100, inner sep=0pt, minimum width=4pt]
            \tikzstyle{ver}=[]
            \tikzstyle{extra}=[circle, draw, fill=black!00, inner sep=0pt, minimum width=2pt]
            \tikzstyle{edge} = [draw,-]
            \tikzstyle{light} = [draw, gray,-]
            \centering
	    	\vspace{2.3 cm}	  
	    	\resizebox{0.35\textwidth}{!}{%
		\begin{tikzpicture}
				[scale=0.5, vertices/.style={draw, fill=black, circle, inner
					sep=0.5pt}]
	
	\foreach \x/\y in {240/v_{7},280/v_{8}}{
	\node[black1] (\y) at (\x:4){};
}
        \node[black1] (v) at (0,0){};
	\foreach \x/\y in {v/v_{7},v/v_{8}}{
	\path[edge] (\x) -- (\y);;
}
                \node[right] at (0.1,-0.45) {$x$};
	            \node[right] at (0.1,-4.45) {$a_2$};
                \node[right] at (-2.65,-4.05) {$a_1$};
                
	%	\vspace{17cm}
				\end{tikzpicture}
				}%
				\caption{$\A$}
			\end{subfigure}
 \begin{subfigure}[]{0.3\textwidth}
			\centering
			\tikzstyle{black1}=[circle, draw, fill=black!100, inner sep=0pt, minimum width=4pt]
\tikzstyle{ver}=[]
\tikzstyle{extra}=[circle, draw, fill=black!00, inner sep=0pt, minimum width=2pt]
\tikzstyle{edge} = [draw,-]
\tikzstyle{light} = [draw, gray,-]
\centering
\resizebox{0.85\textwidth}{!}{%

\begin{tikzpicture}[scale=0.5]

\vspace{3.8cm}

\foreach \x/\y in 				{0/v_{1},40/v_{2},80/v_{3},120/v_{4},160/v_{5},200/v_{6}}{
	\node[black1] (\y) at (\x:4){};
}

\foreach \x/\y in 
    { 60/w_{1}}{
	\node[black1] (\y) at (\x:6){};
}

\node[black1] (v) at (0,0){};

\foreach \x/\y in 				{v/v_{1},v/v_{2},v/v_{3},v/v_{4},v/v_{5},v/v_{6},v/w_{1}}{
	\path[edge] (v) -- (\y);}

\node[right] at (0.1,-0.45) {$x$};
\node[below] at (2.5,0.1) {$e_1^1$};
\node[below] at (2.1,2.0) {$e_2^1$};
\node[right] at (-0.7,2.6) {$e_3^1$};
\node[left] at (2.6,4.4) {$e_4^1$};
\node[left] at (-1.3,2.3) {$e_5^1$};
\node[left] at (-2.0,0.4) {$e_2^1$};
\node[left] at (-1.1,-1.2) {$e_2^2$};
            \node[below] at (4.7,0.7) {$a_1^1$};
            \node[below] at (3.8,3.5) {$a_2^1$};
            \node[right] at (0,4.6) {$a_3^1$};
            \node[above] at (3,5.15) {$a_4^1$};
            \node[left] at (-1.7,4.09) {$a_5^1$};
            \node[left] at (-3.7,1.4) {$a_2^1$};
            \node[left] at (-3.7,-1.3) {$a_2^2$};
        \vspace{1cm}
\end{tikzpicture}
}%
\vspace{1cm}
			\caption{$\B$}
		\end{subfigure}

\vspace{0.3cm}

\begin{subfigure}[]{0.5\textwidth}
				\centering
\tikzstyle{black1}=[circle, draw, fill=black!100, inner sep=0pt, minimum width=4pt]
\tikzstyle{ver}=[]
\tikzstyle{extra}=[circle, draw, fill=black!00, inner sep=0pt, minimum width=2pt]
\tikzstyle{edge} = [draw,-]
\tikzstyle{light} = [draw, gray,-]
\centering
 
 \hspace{1.5cm}
         \resizebox{0.4\textwidth}{!}{%

\begin{tikzpicture}[scale=0.5]

\foreach \x/\y in 				{0/v_{1},40/v_{2},80/v_{3},120/v_{4}}{
	\node[black1] (\y) at (\x:4){};
}

\foreach \x/\y in 
    { 60/w_{1}}{
	\node[black1] (\y) at (\x:6){};
}

\foreach \x/\y in {v_{1}/v_{2},v_{2}/v_{3},v_{3}/v_{4},v_{2}/w_{1}}{
	\path[edge] (\x) -- (\y);}

            \node[below] at (4.7,0.7) {$a_1^1$};
            \node[below] at (3.8,3.5) {$a_2^1$};
            \node[right] at (0,4.6) {$a_3^1$};
            \node[above] at (3,5.15) {$a_4^1$};
            \node[left] at (-1.7,4.09) {$a_5^1$};
\end{tikzpicture}
}%
\caption{$G[B_1]$}
\end{subfigure}
\hspace{-2.5cm}
  \begin{subfigure}[]{0.5\textwidth}
				\centering
				\tikzstyle{black1}=[circle, draw, fill=black!100, inner sep=0pt, minimum width=4pt]
				
		\vspace{0.4cm}
		
\tikzstyle{ver}=[]
\tikzstyle{extra}=[circle, draw, fill=black!00, inner sep=0pt, minimum width=2pt]
\tikzstyle{edge} = [draw, -]
\tikzstyle{light} = [draw, gray,-]
\centering
        \resizebox{0.4\textwidth}{!}{%

	\begin{tikzpicture}[scale=0.5]

\foreach \x/\y in 				{40/v_{2},80/v_{3},120/v_{4}}{
	\node[black1] (\y) at (\x:4){};
}

\foreach \x/\y in 
    { 60/w_{1}}{
	\node[black1] (\y) at (\x:6){};
}

\foreach \x/\y in {v_{2}/v_{3},v_{3}/v_{4},v_{2}/w_{1}}{
	\path[edge] (\x) -- (\y);}
            \node[below] at (3.8,3.5) {$a_2^1$};
            \node[right] at (0,4.6) {$a_3^1$};
            \node[above] at (3,5.15) {$a_4^1$};
            \node[left] at (-1.7,4.09) {$a_5^1$};
\end{tikzpicture}
}%
    \vspace{0.5cm}
				\caption{$G[B_1-{a_1^1}]$} \label{fig:wheelfreegraphtree}
			\end{subfigure}
		\caption{}\label{fig:wheelfreegraph}
		\end{figure}

  	Again using \Cref{lemma:generalcollapsing}$(2)$, $\A \searrow  \langle \{e_1,e_{r_1}\}, \dots,\{e_{r_1 - 1},e_{r_1} \}\rangle$. We now show that each $\B_j$ can be collapsed onto a $1$-dimensional subcomplex of $\dl (G)$. We prove this by induction on the number of elements in $\B_j$. If $l_j=2 $ then the simplex $\B_j$ itself is of dimension $1$.

	Fix $j \in [r_2]$ and assume that $l_j \geq 3$. For each $1 \leq i \leq l_j$, let $e_i^j = (x, a_i^j)$.  Without loss of generality, we can assume that the $a_1^j$ is a leaf vertex in tree $G[B_j]$ and $a_2^j$ is its parent. This implies that for any $i \in \{3,\dots,l_j\}$,  there exist no triangle in $G$ which contains both the edges  $e_1^j$ and $e_i^j$.
	
%	Fix $j \in [r_2]$. Without loss of generality, we can assume that $e_1^j$ and $e_{l_j}^j$  are adjacent to only one triangle and all other edges are adjacent to exactly two triangles ({\color{red} see fig} ).
	
	Since $e_1^j$ and $e_3^j$ are not part of any  triangle, $(\{e_1^j, e_3^j\}, \B_j)$ is a collapsible pair and therefore $\B_j \searrow \langle \B_j \setminus \{e_1^j\}, \B_j \setminus \{e_3^j\} \rangle$.	If $l_j = 3$, then $\B_j \setminus \{e_1^j\}$ and $\B_j \setminus \{e_3^j\}$ are $1$-dimensional. Assume $l_j \geq 4$. 
	
		Now $(\{e_1^j, e_4^j\}, \B_j \setminus \{e_3^j\})$ is a collapsible pair and therefore $ \B_j \setminus \{e_3^j\} \searrow \langle  \B_j \setminus \{e_3^j, e_4^j\}, \B_j \setminus \{e_3^j, e_1^j\} \rangle$. Since $ \B_j \setminus \{e_3^j, e_1^j\} \subseteq  \B_j \setminus \{e_1^j\}$, $\B_j \searrow \langle \B_j \setminus \{e_1^j\},  \B_j \setminus \{e_3^j, e_4^j\} \rangle$. 
		It is easy to observe that by using collapsible pairs in the following order:
		
		$$ ( \{e_1^j, e_5^j\}, \B_j \setminus \{e_3^j, e_4^j\}), \ldots ( \{e_1^j, e_{l_j}^j\},\B_j \setminus \{e_3^j, e_4^j, \ldots, e_{l_j-1}^j\})$$
			and applying the collapses, we get that  $\B_j\searrow \langle \B_j \setminus \{e_1^j\}$, $\{e_1^j, e_2^j\} \rangle $. 
	Since $a_1^j$ is a leaf vertex of $G[B_j]$, $G[B_j - \{a_1^j\}]$ is again a tree (cf \Cref{fig:wheelfreegraphtree}). Therefore, from induction, we have that $\B_j \setminus \{e_1^j\}$ collapses onto a $1$-dimensional subcomplex of $\dl(G)$. Which implies that, $\B_j$ collapses onto a subcomplex of dimension $1$.\\

	\noindent{\bf Case 2:}  Once we have collapsed all simplices of type $1$ then given any simplex $\{e_1,e_2,e_3\}$ of type $2$, it is easy to see that $(\{e_1,e_2\},\{e_1,e_2,e_3\})$ is always a collapsible pair. Thus we can collapse all these simplices to a $1$-dimensional subcomplex of $\dl(G)$.

Since $G$ is connected, $\dl(G)$ is connected. Therefore the result follows from the fact that any connected one dimensional simplicial complex is homotopy equivalent to a wedge of circles. 
\end{proof}

It is an easy observation that any graph which is not $K_4$ and has maximal degree 3 is a wheel-free graph, and hence the previous result implies the following corollary.

	\begin{cor} Let $G$ be a connected graph of maximal degree at most $3$ and $G \ncong K_4$. Then $\dl (G)$ is homotopy equivalent to a wedge of circles.
	\end{cor}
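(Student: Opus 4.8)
The plan is to reduce the statement to \Cref{thm:wheelfree} by verifying the degree hypothesis forces $G$ to be wheel-free. Since \Cref{thm:wheelfree} already gives the conclusion for every connected wheel-free graph, the entire content of the corollary is the combinatorial observation quoted just before it: a connected graph of maximum degree at most $3$ that is not $K_4$ contains no wheel. So the whole proof amounts to establishing that implication and then citing \Cref{thm:wheelfree}.

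First I would pin down which wheels could possibly occur. Recall that $W_n$ is the cone over $C_n$, so its center is adjacent to all $n$ rim vertices and hence has degree $n$ inside $W_n$. If $G$ contained a copy of $W_n$, the center of that copy would have degree at least $n$ in $G$; the hypothesis $\deg_G \le 3$ then forces $n \le 3$, and combined with $n \ge 3$ this leaves only $n = 3$. Thus the sole candidate wheel is $W_3$, which is the cone over the triangle $C_3$, i.e. $W_3 \cong K_4$. The first key step is therefore this degree count: in a graph of maximum degree $\le 3$, the only wheel that can appear at all is $K_4$.

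The second step is to rule out $K_4$ itself using connectivity. Suppose $G$ contained four pairwise adjacent vertices $\{v_1,v_2,v_3,v_4\}$. Each $v_i$ already has three neighbours among the other three, so $\deg_G(v_i) \ge 3$; the maximum-degree hypothesis upgrades this to $\deg_G(v_i) = 3$, meaning no $v_i$ has any neighbour outside $\{v_1,v_2,v_3,v_4\}$. Hence $\{v_1,v_2,v_3,v_4\}$ is a connected component of $G$ isomorphic to $K_4$, and since $G$ is connected this yields $G \cong K_4$, contradicting the assumption $G \ncong K_4$. Therefore $G$ has no $K_4$, hence no $W_3$, hence no wheel of any size, so $G$ is wheel-free. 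Applying \Cref{thm:wheelfree} to the connected wheel-free graph $G$ then gives that $\dl(G)$ is homotopy equivalent to a wedge of circles, completing the proof.

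There is no real technical obstacle here; the argument is entirely a short degree-and-connectivity computation. The only point demanding a little care is the connectivity step in ruling out $K_4$: one must observe that the maximum-degree bound saturates the degrees of the four vertices and so isolates them as a whole component, which is exactly where the hypotheses $\deg_G \le 3$, connectedness, and $G \ncong K_4$ are all used simultaneously.
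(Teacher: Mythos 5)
Your proof is correct and matches the paper's approach exactly: the paper proves this corollary by the same observation (stated there as ``easy'') that maximum degree at most $3$ limits any wheel to $W_3 \cong K_4$, whose presence the degree bound and connectivity would force $G \cong K_4$, so $G$ is wheel-free and \Cref{thm:wheelfree} applies. You have simply written out the degree count and the component-isolation argument that the paper leaves implicit.
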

	
%The above corollary also gives the homotopy type of $\dl$ of 3-regular graphs.
	
Let $n \geq 3$ be a positive integer and $S \subseteq \{ 1, 2, \ldots, \lfloor  \frac{n}{2} \rfloor \}$. 
 The {\it circulant graph} $C_n(S)$ is the graph, whose set of vertices
 $V(C_n(S)) =  \{0,1, \ldots, n-1\}$  and any two vertices $x$ and $y$
 adjacent if and only if $x-y \ (\text{mod n}) \in S \cup (-S)$, where $-S = \{n-a : a \in S\}$. Circulant graphs are also Cayley graphs of $\mathbb{Z}_n$, the cyclic group on $n$ elements. Since $n \notin S$, $C_n(S)$ is a simple graph, {\it i.e.}, does not contains any loop. Further, $C_n(S)$ are $|S \cup (-S)|$-regular graphs. We now prove a structural result for $4$-regular circulant graphs. This result will enable us to do the computation of the homotopy type of their $\dl$.
%  It can be easily  verified that $C_n(S)$
%  is connected if and only if $S \cup -S$ generates  $\mathbb{Z}_n$.
  
\begin{prop} \label{prop:circulant}
Let $G$ be a  $4$-regular circulant graph. Then each connected component of $G$ is either wheel-free or isomorphic to $K_5$ or to $\Sigma C_4$.
\end{prop}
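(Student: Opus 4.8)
The plan is to reduce to the connected case and then translate wheel-freeness into a statement about a single vertex-neighbourhood, exploiting that circulant graphs are vertex-transitive.

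First I would record two reductions. The connected component of $0$ in $C_n(\{a,b\})$ is the subgroup $H=\langle a,b\rangle=\langle\gcd(a,b,n)\rangle$; writing $d=\gcd(a,b,n)$, the group isomorphism $kd\mapsto k$ from $H$ to $\mathbb{Z}_{n/d}$ is a graph isomorphism onto $C_{n/d}(\{a/d,\,b/d\})$, which is connected and again $4$-regular. Hence it suffices to treat a connected $4$-regular circulant graph $G=C_m(\{a,b\})$ with $1\le a<b<m/2$ (the strict inequality because $4$-regularity forbids $b=m/2$) and $\gcd(a,b,m)=1$; note $m\ge 5$, since $N_G(0)=\{a,-a,b,-b\}$ must have four distinct elements. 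The second reduction is that a graph contains a wheel as a subgraph with centre $x$ exactly when $G[N_G(x)]$ contains a cycle: a cycle $C\subseteq N_G(x)$ together with the spokes from $x$ is a wheel, and conversely the rim of any wheel centred at $x$ is a cycle in $G[N_G(x)]$. Since $C_m(\{a,b\})$ is a Cayley graph of $\mathbb{Z}_m$, translation by $-x$ is an automorphism sending $N_G(x)$ to $N_G(0)$, so $G$ is wheel-free if and only if $G[N_G(0)]$ is acyclic.

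Next I would compute $G[N_G(0)]$ on the four vertices $\{a,-a,b,-b\}$. Two of them are adjacent exactly when their difference lies in $\{\pm a,\pm b\}$, and because $1\le a<b<m/2$ every such congruence collapses to an honest integer equation: the only relations that can hold are $b=2a$, $\ 2a+b=m$, $\ a+2b=m$, $\ 3a=m$, and $3b=m$ (for instance $2b\equiv a$ is impossible, as $0<2b-a<m$). Reading off the six potential edges, the two diagonals $\{a,-a\}$, $\{b,-b\}$ are governed by $\{a,-a\}\in G[N_G(0)]\iff 3a=m\text{ or }b=2a\text{ or }2a+b=m$ and $\{b,-b\}\in G[N_G(0)]\iff 3b=m\text{ or }a+2b=m$, while the perfect matching $\{\{a,b\},\{-a,-b\}\}$ is present iff $b=2a$ and the matching $\{\{a,-b\},\{-a,b\}\}$ is present iff $2a+b=m$ or $a+2b=m$.

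The core of the argument is then a finite case analysis over which of the five equations hold simultaneously; I expect this bookkeeping, and in particular checking that incompatible pairs eliminate the spurious configurations (e.g.\ $b=2a$ with $2a+b=m$ forces the excluded $b=m/2$, and $3b=m$ with $2a+b=m$ forces $a=b$), to be the main obstacle. The outcome is clean: since the three perfect matchings of $K_4$ are exactly the diagonal pair and the two matchings above, a cycle in $G[N_G(0)]$ arises only when two of these matchings are fully present, and this happens in just two ways. If both non-diagonal matchings appear one is forced into $b=2a$ and $a+2b=m$, i.e.\ $m=5a$, so $\gcd(a,b,m)=a=1$ gives $(a,b,m)=(1,2,5)$ and $G=C_5(\{1,2\})=K_5$ (the connection set is all of $\mathbb{Z}_5\setminus\{0\}$). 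If instead the first matching together with both diagonals appears one is forced into $b=2a$ and $3b=m$, i.e.\ $m=6a$, so $a=1$ gives $(a,b,m)=(1,2,6)$ and $G=C_6(\{1,2\})$, where $0$ and $3$ are non-adjacent vertices each joined to the $4$-cycle induced on $\{1,2,4,5\}$; this is the octahedron $K_{2,2,2}=\Sigma C_4$. In every remaining case $G[N_G(0)]$ is a forest, so $G$ is wheel-free, and the proposition follows.
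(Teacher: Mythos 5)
Your proposal is correct, and it takes a genuinely different route from the paper's. The paper argues structurally: it assumes a wheel $W_m$ exists (so $m\in\{3,4\}$ by $4$-regularity), places its center at $0$ with $N_G(0)=\{s,t,n-s,n-t\}$, derives $t=2s$ and $n=5s$ in the $m=3$ case to get a $K_5$ component, and in the $m=4$ case uses vertex-transitivity to propagate wheel centers to the rim vertices, forcing two antipodal vertices with identical neighbourhoods and hence a $\Sigma C_4$ component. You instead reduce to a connected normalized circulant $C_m(\{a,b\})$ with $1\le a<b<m/2$ via the subgroup $\langle \gcd(a,b,n)\rangle$, translate wheel-freeness into acyclicity of $G[N_G(0)]$ (valid, since $4$-regularity caps the rim at $C_3$ or $C_4$, both of which live in the neighbourhood of the center), and then classify all six potential edges of $G[N_G(0)]$ by the five Diophantine relations $b=2a$, $2a+b=m$, $a+2b=m$, $3a=m$, $3b=m$; your interval arguments ruling out other congruences (e.g.\ $2b\equiv a$) are sound, and the two realizable configurations correctly yield $(a,b,m)=(1,2,5)$, i.e.\ $K_5$, and $(a,b,m)=(1,2,6)$, i.e.\ the octahedron $\Sigma C_4$. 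The one place your sketch is thinner than it should be: besides the two configurations you resolve, the third cycle-producing configuration --- both diagonal edges together with the matching $\{\{a,-b\},\{-a,b\}\}$ --- must also be eliminated; it dies by exactly the kind of incompatibilities you list ($2a+b=m$ with $3b=m$ or with $a+2b=m$ forces $a=b$; $a+2b=m$ with $3a=m$ forces $a=b$; $a+2b=m$ with $b=2a$ is the $K_5$ case already covered), so nothing fails, but the case should be stated explicitly since a cycle in the $4$-vertex graph can arise from any two of the three matchings of $K_4$. What your route buys is a stronger, fully explicit statement --- up to the reduction, the only non-wheel-free connected $4$-regular circulants are $C_5(\{1,2\})\cong K_5$ and $C_6(\{1,2\})\cong\Sigma C_4$ --- and it replaces the paper's somewhat delicate wheel-propagation step in the $m=4$ case by routine arithmetic; the paper's argument, in exchange, needs no gcd reduction or normalization of the connection set and is shorter on bookkeeping.
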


\begin{proof}
    Let $\{s, t\}$ be the generating set of $G$ such that $s< t$ and $V(G) = \{0,1, \ldots, n-1\}$. Symmetry in the circulant graph implies that connected components of $G$ are isomorphic. Suppose $G$ is not wheel-free and say it has a subgraph $H$ isomorphic to $W_m$, a wheel on $m+1$ vertices. Since $G$ is $4$-regular, $3 \le m \leq 4$. Without loss of generality we can assume that $0$ is the center vertex of $W_m$. Clearly, $N_G(0) = \{s, t, n-s, n-t\}$.
    
    \noindent {\bf Case 1:} $m =3$.
    
    Since $W_3 \cong K_4$, we see that $|V(W_3) \cap N_G(0)| = 3$. Therefore, either $s \sim t $ or $n-s \sim n-t$ in $W_3$. In both the cases we get that $t = 2s$. Since $s < t$, $n-t < n-s$. If $N_{W_3}(0)=\{s,t,n-t\} $, then $n-t < n-s$ implies that $n-t = 3s$, thus $n= 3s + 2s = 5s$. Similar analysis  for any 3 element subset of $N_G(0)$ implies that $n =5s$. This implies that  $N_G(0) =\{s,2s,3s,4s\}$ and therefore $G[N_G(0) \cup \{0\}]$ is a clique in $G$. Moreover, $4$-regularity of $G$ implies that $G[\{0,s,2s,3s,4s\}]$ is a component.
    
    \noindent {\bf Case 2:} $m=4$.
    
        Let $a \sim b \sim c \sim d \sim a$ be the outer cycle of $W_4$. By symmetry, the vertex  $a$ is also  a center of a wheel with $5$ vertices, say $W_4'$. Let $N_G(a) = \{b,d,0,x\}$. If $x=c$, then  $\{a,b,c,0\}$ forms a $K_4$. By Case 1, we get that $\{a,b,c,d,0\}$ forms a $K_5$. Therefore, let $x \ne c$, then $x\not\sim 0$ implies that  $d \sim x \sim b\sim 0 \sim d$ is the outer cycle  of $W_4'$. Similarly, $b$ is the center of some other wheel with 5 vertices, say $W_4''$. Since $N_G(b) = \{a,0,c,x\}$, the outer cycle of $W_4''$ is given by $a \sim x \sim c \sim 0 \sim a$. Therefore, $N_G(x) = N_G(0) = \{a,b,c,d\}$. Again, the $4$-regularity of $G$ implies that $G[\{0,a,b,c,d,x\}]$ is a component and is isomorphic to $\Sigma C_4$.
    \end{proof}
    
 %. S   Clearly, $N_G(0) = \{a,b,c,d\}$ and $\{b,d,0\} \subset N_G(a) = \{b,d,0,x\}$ say. 
    
 %   Let  $V(H) = \{0,1,2,3\}$, where $0$ is center vertex, see Figure (??).
%  Shall we write something about $3$ regular here?
 \begin{cor} \label{cor:circulant}
 Let $G$ be a  $4$-regular circulant graph. Then each connected component of $\dl (G)$ is homotopy equivalent to either a wedge of circles or a wedge of $2$-spheres. 
\end{cor}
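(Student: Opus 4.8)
The plan is to combine the structural classification of \Cref{prop:circulant} with the homotopy computations established earlier in the paper. First I would reduce to a single connected component. Since edges of $G$ lying in distinct connected components of $G$ are never incident, the line graph $L(G)$ is the disjoint union of the line graphs of the components of $G$, and passing to clique complexes preserves disjoint unions; moreover the line graph of a connected graph with at least one edge is itself connected, so $\dl$ of such a component is connected. Hence each connected component of $\dl(G)$ is exactly $\dl(G_i)$ for a single connected component $G_i$ of $G$ (the components of $G$ consisting of isolated vertices contribute nothing). It therefore suffices to verify the conclusion for each of the three types of $G_i$ permitted by \Cref{prop:circulant}.

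If $G_i$ is wheel-free, then \Cref{thm:wheelfree} immediately gives that the connected complex $\dl(G_i)$ is homotopy equivalent to a wedge of circles. If $G_i \cong K_5$, then $\dl(G_i) = NM_2(K_5)$, which by \Cref{thm:completegraph} is homotopy equivalent to a wedge of $2$-spheres. The only remaining case is $G_i \cong \Sigma C_4$, which I would treat via the suspension lemma rather than a direct collapse: as $C_4$ is triangle-free, connected, and has four vertices, \Cref{lem:nm2ofsuspension} yields $\dl(\Sigma C_4) \simeq \Sigma(\dl(C_4))$. By \Cref{prop:nm2oftrianglefree} we have $\dl(C_4) \simeq \bigvee_{u(C_4)} \bS^1$ with $u(C_4) = e(C_4) - (v(C_4) - 1) = 4 - 3 = 1$, so $\dl(C_4) \simeq \bS^1$ and therefore $\dl(\Sigma C_4) \simeq \Sigma \bS^1 = \bS^2$, a single $2$-sphere (a fortiori a wedge of $2$-spheres).

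Assembling the three cases proves the corollary. Once the classification of \Cref{prop:circulant} is available, the argument is essentially bookkeeping, so I do not anticipate a genuine obstacle. The two points deserving care are the reduction to a single component, namely the verification that the connected components of $\dl(G)$ coincide with the $\dl$ of the nontrivial components of $G$, and the identification of the $\Sigma C_4$ component through the suspension lemma and the elementary computation $u(C_4)=1$.
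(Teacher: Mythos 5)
Your proposal is correct and follows essentially the same route as the paper: apply the classification of \Cref{prop:circulant} componentwise, then invoke \Cref{thm:wheelfree} for the wheel-free case, \Cref{thm:completegraph} for $K_5$, and \Cref{lem:nm2ofsuspension} together with $\dl(C_4)\simeq \bS^1$ for $\Sigma C_4$. Your explicit reduction to a single component and the computation $u(C_4)=1$ merely spell out details the paper leaves implicit.
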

\begin{proof}
    % Let $G = C_1 \sqcup \dots \sqcup C_k$ be the $k$ components of a $4$-regular circulant graph. Then  \Cref{lem:disconn graph NM2} implies that $\dl(G) \simeq \dl(C_1) \sqcup \dl(C_k)$. 

    We first note that for any cycle graph $C_r$, $\dl(C_r) \simeq \bS^1$ whenever $r\geq 4$. From \Cref{prop:circulant},  each connected component of $G$ is either wheel-free or isomorphic to $K_5$ or to $\Sigma C_4$. Since $\dl(\text{wheel-free}) \simeq \bigvee \bS^1$ (cf. \Cref{thm:wheelfree}), $\dl (K_5) \simeq \bigvee \bS^2$ (cf. \Cref{thm:completegraph}) and $\dl (\Sigma C_4) \simeq \Sigma(\dl(C_4)) \simeq \Sigma(\bS^1) = \bS^2$ (cf. \Cref{lem:nm2ofsuspension}), the result follows.
\end{proof}

It is to note here that the computations done in this section gives the exact homotopy type of $\dl$ for all the 2, 3 and 4-regular circulant graphs. 

\section{Further directions}
%  Throughout this article we saw that $\dl(G)$ are homotopy equivalent to a wedge of equidimensional spheres
%  Is $\dl (G)$ a wedge of spheres of dimensions $1$ and $2$ up to homotopy  for all connected graphs $G$?
%  We point out here that all the classes of graphs that we considered in this article, $\dl$ has always been a wedge of equidimensional spheres. Therefore, it would be interesting to ask the following question. 
    
    % \begin{ques}
    % Can the classes of graphs be classified whose $\dl$ is a wedge of equidimensional spheres.
    % \end{ques}
    
  For any simplicial complex $\K \simeq \big{(}\bigvee\limits_{m}\bS^{1}\big{)}\vee \big{(}\bigvee\limits_{n}\bS^{2}\big{)}$, \Cref{lem:disconn graph NM2} shows that $\dl(\big{(}\bigvee\limits_{m}C_4\big{)}\vee \big{(}\bigvee\limits_{n}K_4\big{)}) \simeq \K$, where the wedge of graphs is taken along a vertex as $1$-dimensional simplicial complexes. It is well known that the clique complex functor $\Delta$ is universal ({\it i.e.}, given any simplicial complex $\K$, there is a graph $G$ such that $\Delta(G)\simeq \K$), whereas the line graph functor $L$ is not (for example, $K_{1,3}$ is not a line graph of any graph). This raises the following natural question. 
    
    \begin{ques}Is the functor $\dl$ universal from the category of graphs to the category of $3$-Leray simplicial complexes?
    \end{ques}
    
We note here that for all the classes of graphs considered in this article, $\dl$ has always been a wedge of equidimensional spheres. Therefore, it would be interesting to know the following. 
    
    \begin{ques}
    Can the classes of graphs be classified whose $\dl$ is a wedge of equidimensional spheres? More specifically, can we classify those graphs whose $\dl$ is simply connected?
    \end{ques}
   
\section*{Acknowledgements}
 We would like to thank Russ Woodroofe for his insightful comments and suggestions. We are also thankful to the anonymous referee for pointing out the connection between the nerve of a cover of $\dl$ and the $2$-skeleton of the clique complex, which helped us in shortening many proofs of \Cref{sec:functor} significantly. The first and third authors are partially supported by a grant from the Infosys Foundation.

% \begin{thm}
% 	For $m,n,r \in \mathbb{N}$, $\dl(K_{m,n,r}) \simeq \displaystyle\vee_{t-1}t \mathbb{S}^1$, where $t = mn - (m+n-1)$.
% \end{thm}

% \begin{rmk}
% 	For any complete  $r$-partite graph, $\dl(K_{m_1, m_2,\dots, m_r})$ is a wedge sum of equi-dimensional spheres of dimension less than 3.
% \end{rmk}

% \begin{cor}
% 	For a complete $r$-partite graph $K_{m_1, m_2,\dots, m_r}$, $\dl(K_{m_1, m_2,\dots, m_r})$ is 3-Leray.
% \end{cor}

%===============================================

%\section{\texorpdfstring{$NM_3$}{} of a class of complete tripartite graphs}
  
%===============================================

%================================================

\end{document}